\numberwithin{equation}{section}
\theoremstyle{plain}
\newtheorem{thm}{Theorem}[section]
\newtheorem{lem}[thm]{Lemma}
\newtheorem{defn}[thm]{Definition}
\newcommand{\enter}{\bigskip}
\date{}
\begin{document}
 \author{{Prasanta Kumar Barik and Ankik Kumar Giri}
  }

\title {Existence and uniqueness of weak solutions to the singular kernels coagulation equation with collisional breakage }

\maketitle

\hrule \vskip 11pt

\begin{quote}
{\small {\em\bf Abstract.} In this article, we investigate the existence and uniqueness of weak solutions to the continuous coagulation equation
 with collisional breakage for a class of unbounded collision kernels and distribution function. The collision kernels and distribution functions may have a singularity on both the coordinate axes. The proof of the existence result is based on a classical weak $L^1$ compactness method applied to suitably chosen to approximate equations. The question of uniqueness is also shown for some restricted class of collision kernels.\enter
}
\end{quote}
\noindent
{\bf Keywords:} Coagulation; Collisional breakage; Weak compactness; Existence; Uniqueness.\\
{\rm \bf MSC (2010).} Primary: 45K05, 45G99, Secondary: 34K30.\\

\vskip 11pt \hrule

\section{Introduction}\label{existintroduction1}
Coagulation-fragmentation equations (CFEs) are used as models that describe the dynamics of many physical phenomena in which two or more particles can aggregate via a collision between particles to form bigger ones or  breakage into smaller pieces. The coagulation event occurs in different chemical, biological and physical processes such as colloidal aggregation, aggregation of red blood cells, crystallization and polymerization, for instance. But, the breakage process arises at least two different cases that depend on the breakage behaviour of particles. The breakage of particles may happen due to the collision between a pair of particles (collision-induced breakage) named as \emph{nonlinear breakage} or other than the interaction between a pair of particles (external forces or spontaneously) known as the \emph{linear breakage}. For example, due to the particle-wall interaction, chemical reactions or fluid shear, linear fragmentation arises where some of the examples in which the nonlinear breakage occurs are rain droplet breakage, the formation of stars and planets and many more. In this article, we have considered the continuous coagulation and nonlinear breakage processes.\\

  The continuous coagulation equation with collisional breakage \cite{Brown:1995, Barik:2018Weak, Safronov:1972, Vigil:2006, Wilkins:1982} for the concentration $g=g(\mu, t)$ of particle of volume $\mu \in \mathbb{R}_{+}:=(0, \infty)$ at time $t\ge 0$ read, as
\begin{eqnarray}\label{sccecfe}
\frac{\partial g}{\partial t}  =\mathcal{B}_c(g)-\mathcal{D}_{cb}(g) +\mathcal{B}_b(g),
\end{eqnarray}
with the following initial data
\begin{align}\label{sin1}
g(\mu, 0) = g_{in}(\mu)\geq 0\ \ \mbox{a.e.},
\end{align}
  where
\begin{eqnarray*}\label{sC1}
\mathcal{B}_c (g) (\mu, t) := \frac{1}{2}\int_0^{\mu} E(\mu-\nu, \nu) \Psi (\mu-\nu, \nu) g(\mu-\nu, t)g(\nu, t)d\nu,
\end{eqnarray*}
\begin{eqnarray*}\label{sC2}
\mathcal{D}_{cb}(g)(\mu, t) : = \int_{0}^{\infty}   \Psi(\mu, \nu) g(\mu, t) g(\nu, t)d\nu,
\end{eqnarray*}
 \begin{eqnarray*}\label{sB1}
\mathcal{B}_b (g) (\mu, t) :=  \frac{1}{2} \int_{\mu}^{\infty} \int_{0}^{\nu}P(\mu | \nu-\tau; \tau)  E'(\nu-\tau, \tau ) \Psi(\nu-\tau, \tau)g(\nu-\tau, t)g(\tau, t)d\tau d\nu,
\end{eqnarray*}

and $E(\mu, \nu)+E'(\mu, \nu)=1$ with $0\leq E, E' \le 1$.\\

Here, the collision kernel $\Psi(\mu, \nu)$ accounts for the rate at which a particle of volume $\mu$ and a particle of volume $\nu$ collide which is symmetric with respect to $\mu$ $\&$ $\nu$ and a non-negative measurable function on ${\mathbb{R}_{+}} \times {\mathbb{R}_{+}}$. Since each collision must result in either coalescence or breakup. Thus, let $E(\mu, \nu )$ denotes the probability that the two colliding particles $\mu$ and $\nu$ aggregate into a single one $\mu+\nu$ whereas $E'(\mu, \nu )$ describes the probability that the two colliding particles $\mu$ and $\nu$ breakage into two or more daughter particles with possible transfer of mass or elastic collisions between two fragments during the collision. Moreover, $E'$ is complementary with $E$. In addition, both $E$ and $E'$ are symmetric in nature. The distribution function $P(\mu | \nu; \tau)$ describes the contribution of particles of volume $\mu$ produced from the collisional breakage event arising from the interaction between a pair of particles of volumes $\nu$ and $\tau$ which is also a nonnegative symmetric function in nature with respect to second and third variables, i.e.  $P(\mu| \nu; \tau)=P(\mu | \tau; \nu) \geq 0$. Clearly, this distribution function satisfies
\begin{equation*}
P(\mu| \nu; \tau)\begin{cases}
=0,\ \ & \text{if}\ \mu > \nu + \tau, \\
\geq 0,\ \ &  \text{if}\ 0< \mu \leq \nu +\tau.
\end{cases}
\end{equation*}
Furthermore, the mass conservation property preserves during the collisional breakage event, i.e.
\begin{align}\label{smass1}
\int_{0}^{\nu+\tau} \mu P(\mu | \nu; \tau)d\mu =\nu+\tau,\ \ \text{for all}\ \ \nu>0\ \& \ \tau >0.
\end{align}
In addition, let us mention another property of the distribution function, as
the total number of particles resulting from the collisional breakage process is given by
\begin{align}\label{sTotal particles}
\int_{0}^{\nu+\tau}P(\mu | \nu; \tau)d\mu = N,\ \text{for all}\  \nu>0\ \& \ \tau>0.
\end{align}

The first term, $\mathcal{B}_c$ of (\ref{sccecfe}) describes the increase the formation of particles of volume $\mu$ by coalescence, and the last term, $\mathcal{B}_b$ of (\ref{sccecfe}) represents the birth of particles of volume $\mu$ due to collisional breakage. In addition, the factor $1/2$ appears in $\mathcal{B}_c$ and $\mathcal{B}_b$ to neglect  double counting of formation of particles of volume $\mu$ due to both the coalescence and the collisional breakage events. The second term, $\mathcal{D}_{cb}$ of (\ref{sccecfe}) shows the death of particles of volume $\mu$ due to both the coalescence and collisional breakage. On the other hand, the term $\mathcal{D}_{cb}$ can be expressed as
 \begin{eqnarray*}
\mathcal{D}_{cb}(g)(\mu, t) : = \int_{0}^{\infty}  E(\mu, \nu) \Psi(\mu, \nu) g(\mu, t) g(\nu, t)d\nu +\int_{0}^{\infty}  E'(\mu, \nu) \Psi(\mu, \nu) g(\mu, t) g(\nu, t)d\nu.
\end{eqnarray*}

Now, consider some special cases arise from the continuous coagulation and collisional breakage equations.
\begin{itemize}
  \item If $E(\mu, \nu)=1$, then equation (\ref{sccecfe}) converts into the classical continuous \emph{Smoluchowski coagulation equation}, see \cite{Barik:2018Mass}.\\
  \item  If the collision of a particle of volume $\mu$ with a particle of volume $\nu$ results in either the coalescence of both in an $(\mu+\nu)$ or in an elastic collision leaving the incoming clusters unchanged. In that case $P(\mu |\mu;\nu)=P(\nu|\mu;\nu)=1$ and $P(\mu^{\ast}|\mu;\nu)=0$ if $\mu^{\ast} \notin \{ \mu, \nu \} $. Again, (\ref{sccecfe}) also becomes the continuous version of Smoluchowski coagulation equation with coagulation kernel $(E(\mu, \nu) \Psi(\mu, \nu))$.\\
  \item  By substituting $P(\mu|\nu;\tau)=\chi_{[\mu, \infty)}(\nu)B(\mu|\nu; \tau) + \chi_{[\mu, \infty)}(\tau)B(\mu|\tau; \nu)$ into (\ref{sccecfe}), it can easily be seen that (\ref{sccecfe}) transfer into the \emph{pure nonlinear breakage equation}, see \cite{Cheng:1990, Cheng:1988, Ernst:2007, Kostoglou:2000}.

\end{itemize}

Next, it is important to mention some physical properties, i.e. moments of concentration of particles. Let $\mathcal{M}_q$ denotes the $q^{th}$
moment of the concentration of particles $g(\mu,t)$, which is defined as
\begin{align*}
\mathcal{M}_q(t)=\mathcal{M}_q(g(\mu, t)) := \int_0^{\infty} \mu^q g(\mu, t)d\mu,\ \ \text{ where}\ \ q \in (-\infty, \infty).
\end{align*}
The total number of particles and total mass of particles are denoted by $\mathcal{M}_0$ and $\mathcal{M}_1$, respectively.
In coagulation process, the total number of particles, $\mathcal{M}_0$, decreases whereas in collisional breakage process, $\mathcal{M}_0$, increases with time. However, the total mass (volume) of the system may or may not be conserved during the coagulation and collisional breakage processes that depends on the nature of the coagulation kernel ($E \Psi$) and breakup kernel ($E' \Psi$). It is worth to mention that the negative moments are also very useful in handling the case of some physical singular collision kernels such as Smoluchowski collision kernel for Brownian motion and Granulation kernel for fluidized bed etc. which have been discussed in \cite{Camejo:2013, Camejo:2015I, Camejo:2015II, Escobedo:2004, Norris:1999}.\\

\subsection{Literature Overview}
Before getting into more details of the present work, let us first recall available literature related to the coagulation equation with
collisional breakage. There is a vast literature available on the well-posedness of the continuous coagulation and linear breakage
 models. In \cite{Giri:2011, Barik:2017Anote, Escobedo:2003, Stewart:1989, Stewart:1990}, the authors have discussed the existence and uniqueness of solutions to the continuous coagulation and linear breakage equations with nonsingular coagulation kernels under different growth conditions on coagulation and breakage kernels, whereas in \cite{ Camejo:2015I, Camejo:2015II, Canizo:2011, Escobedo:2006, Fournier:2005, Norris:1999}, the existence and uniqueness results on singular coagulation kernels have been established. The existence of self-similar solutions to continuous coagulation and linear breakage models have been discussed in \cite{Canizo:2011, Escobedo:2006, Fournier:2005, Norris:1999} whereas, in \cite{Camejo:2015I} and \cite{Camejo:2015II}, respectively, the existence and uniqueness of weak solutions to Smoluchowski coagulation equations and CFEs have been shown. However, there are a few number of articles in which the collisioal breakage or nonlinear fragmentation model have been considered, see \cite{Cheng:1990, Cheng:1988, Ernst:2007, Kostoglou:2000}. In \cite{Cheng:1990, Cheng:1988, Ernst:2007, Kostoglou:2000}, authors have demonstrated scaling solutions as well as asymptotic behaviour of solutions to the \emph{pure nonlinear breakage models}. Moreover, they have also found analytical solutions for some specific collision and breakup kernels. In 1972, Safronov has proposed a new kinetic model which is a \emph{continuous coagulation and collisional breakage model}, see \cite{Safronov:1972} which has been further studied by Wilkins in \cite{Wilkins:1982}. This model becomes the continuous nonlinear fragmentation model if $w(\mu, \nu)=0$, where $w(\mu, \nu)$ is the probability that particles of volume $\mu$ and $\nu$ sticks together during the collision. In $2001$, Lauren\c{c}ot and Wrzosek have discussed the existence and uniqueness of weak solutions by using a weak $L^1$ compactness argument to the discrete coagulation and collisional breakage model. They have also studied mass conservation, gelation and large time behaviour of the solutions. This model becomes the discrete nonlinear fragmentation model for the same probability of $w(\mu, \nu)$. Recently, in \cite{Barik:2018Weak} Barik and Giri have shown the existence of weak solutions for a particular classes of nonsingular collision kernels. The main novelty of the present work is to include the singular collision kernels to show the existence of weak solutions to continuous coagulation and collisional breakage models. Unfortunately, we have not covered our previous result given in \cite{Barik:2018Weak}. Moreover, a uniqueness result is shown which is motivated by \cite{Stewart:1990}.\\

 This paper is organized as follows:
 \begin{itemize}
   \item In Section $2$, we construct some assumptions on the $\Psi $ $\&$ $g_{\text{in}}$, definition and together with the main results of this article.
   \item  The existence of weak solutions to  (\ref{sccecfe})--(\ref{sin1}) has been established by using a weak $L^1$
  compactness method in Section $3$.
   \item In the last section, a uniqueness result of weak solutions to (\ref{sccecfe})--(\ref{sin1}) is established for a class of restricted collision kernel which is motivated by \cite{Stewart:1990}.
 \end{itemize}


\section{Preliminaries and results}
We first consider the basic assumptions on the collision kernel $\Psi$, initial data $g_{\text{in}}$, probability $E$ and the distribution function $P$ throughout the paper. Assume that the collision kernel $\Psi$ satisfies
\begin{align}\label{collisionKernels}
 \Psi(\mu, \nu) \leq k \frac{ (1+\mu)^{\omega}(1+\nu)^{\omega}}{(\mu +\nu)^{\sigma}}\ \   \text{for all}\  (\mu, \nu)\in {\mathbb{R}_{+}}
\times {\mathbb{R}_{+}},
\end{align}
$ 0 \le \omega -\sigma  < 1, \sigma \in (0, {1}/{2} ), 0 \le \omega <1$ and some constant $k \geq 0$.\\
Next, we describe that the probability $E$ enjoys the following relation for small volume particles
\begin{eqnarray}\label{probabilitys}
 E(\mu, \nu) \geq \frac{\eta(r)-2}{\eta(r)-1},\ \ \forall (\mu, \nu) \in (0, 1) \times (0, 1),
\end{eqnarray}
where $ \eta(r)$ is a real valued function of $r$ such that $ \eta(r) \geq N \geq 2$, for $0  \leq r<1$ and $N$ total number of fragments is obtained after the collision between a pair of particles (given in \eqref{sTotal particles}).\\
\\
We further assume the following four assumptions on the distribution function: recalling same $\eta(r)$ and $r$ from \eqref{probabilitys}, we have
\begin{eqnarray}\label{distribution1s}
 \int_0^{\nu} \mu^{-r}  P(\mu|\nu -\tau;\tau)d\mu \leq \eta(r) \nu^{-r}.
\end{eqnarray}

 For $\nu \in (0, \lambda)$  and any small measurable subset $A$ of  $(0, \lambda)$ with $|A| \leq \delta$, there exist $\theta_1$,
 $\theta_2$ $\in (0, 2\sigma]$ such that
\begin{eqnarray}\label{distribution2s}
 \int_{0}^{\nu} \chi_{A}(\mu) P(\mu|\nu-\tau;\tau)d\mu \leq \Omega_1(|A|)\nu^{-\theta_1}, \ \ \ \text{where}\ \ \lim_{\delta \to 0} \Omega_1 ( \delta )=0,
\end{eqnarray}
and
\begin{eqnarray}\label{distribution3s}
 \int_{0}^{\nu} \chi_{A}(\mu)\mu^{-\sigma} P(\mu|\nu-\tau;\tau)d\mu \leq \Omega_2(|A|)\nu^{-\theta_2}, \ \ \text{where}\ \
 \lim_{\delta \to 0} \Omega_2 ( \delta)=0.
\end{eqnarray}
Here, $|A|$ denotes the Lebesgue measure of $A$ and $\chi_{A}$ is the characteristic function on a set $A$ given by
\begin{equation*}
\chi_{A(\mu)}:=\begin{cases}
1,\ \ & \text{if}\ \mu\in A, \\
0,\ \ &  \text{if}\ \mu \notin A.
\end{cases}
\end{equation*}
 For $\nu +\tau > \lambda$ and $\mu\in (0, \lambda)$ for some $\tau_2 \in [0,1)$ with an additional restriction $\tau_2 +\sigma <1$, we have
\begin{eqnarray}\label{distribution4s}
 P(\mu|\nu;\tau) \leq k'(\lambda)\mu^{-\tau_2},\ \ \text{ where }\ k'(\lambda)>0.
 \end{eqnarray}

In order to prove the uniqueness result, we need the further restriction on collision kernel given in \eqref{collisionKernels}, as
\begin{eqnarray}\label{collisionKerneluniques}
\Psi(\mu, \nu) \le \frac{k}{(\mu+\nu)^{\sigma}}\ \ \text{for}\ k \ge 0\ \ \text{and}\ \sigma \in (0, 1/2).
\end{eqnarray}

Finally, we turn to assume the initial data $g_{\text{in}}$ satisfies
\begin{eqnarray}\label{initialdatas}
g_{\text{in}} \in \mathcal{S}^{+},
\end{eqnarray}
where $\mathcal{S}^{+}$ the positive cone of the Banach space
\begin{eqnarray*}
\mathcal{S}:= L^1(\mathbb{R}_{+}; (1+\mu+\mu^{-2\sigma}) d\mu),        
\end{eqnarray*}
endowed with the norm
\begin{eqnarray*}
 \|g\|_{\mathcal{S}}:=\int_{0}^{\infty}(1+\mu+\mu^{-2\sigma})|g(\mu)|d\mu.
\end{eqnarray*}

It can easily be seen that $\mathcal{S}$ is a Banach with respect to norm $\|\cdot\|_{\mathcal{S}}$, see \cite{Camejo:2013}.
The existence and uniqueness of weak solutions to (\ref{sccecfe})--(\ref{sin1}) will be shown on the Banach space $\mathcal{S}^+$.
Now, we formulate weak solutions to (\ref{sccecfe})--(\ref{sin1}) by using the following definition.
\begin{defn}\label{sdef1} A solution $g$ of (\ref{sccecfe})--(\ref{sin1}) is a non-negative continuous function $g: [0,T]\to \mathcal{S}^+$ such that, for a.e. $\mu \in \mathbb{R}_{+}$ and all $t\in [0,T]$,\\
$(i)$  the following integrals are finite
\begin{align*}
\int_{0}^{t}\int_{0}^{\infty}\Psi(\mu, \nu)g(\nu, s)d\nu ds<\infty,\ \  \mbox{and} \ \ \int_{0}^{t} \mathcal{B}_b(g)(\mu, s)ds, 
\end{align*}
$(ii)$  the function $g$ satisfies the following weak formulation of (\ref{sccecfe})--(\ref{sin1})
\begin{align*}
g(\mu, t)&=g_{\text{in}}(\mu)+ \int_{0}^{t}[\mathcal{B}_c(g)(\mu, s)-\mathcal{B}_{cb}(g)(\mu, s)+\mathcal{B}_b(g)(\mu, s)]ds,
\end{align*}
where $T \in \mathbb{R}_{+}$.
\end{defn}

 Let us now verify \eqref{collisionKernels}  by taking the following example of collision kernel:\\

$(1)$ based on kinetic theory \cite{Aldous:1999},
\begin{eqnarray*}
 \Psi(\mu, \nu)=k(\mu^{1/3}+\nu^{1/3})(\mu \nu)^{1/2} (\mu +\nu )^{-3/2}.
\end{eqnarray*}

%
%

 Next, let us consider the following distribution function
  \begin{eqnarray*}
  P(\mu|\nu;\tau)= &(\theta +2)\frac{\mu^{\theta}}{(\nu+\tau)^{\theta +1}},\ \text{where}\ -2 <\theta \leq 0\  \text{and}\ \mu<\nu+\tau.
 \end{eqnarray*}
By inserting $P$ into \eqref{sTotal particles}, it is observed that the distribution function $P$ is physically meaningful only if $-1< \theta \leq 0$. For $\theta =0$, this gives the binary breakage and for $-1< \theta < 0$, we get the finite number of daughter particles which is denoted by $N$ and written as $N = \frac{\theta +2}{\theta +1}$. One can also easily see that for $\theta \in (-2, -1]$, we get an infeasible number of daughter particles. Therefore, in this article, we have considered $\theta \in (-1, 0]$. Let us verify \eqref{distribution1s}--\eqref{distribution4s} by considering the above example of distribution function $P$. First, we check \eqref{distribution1s}, as
\begin{align*}
 \int_0^{\nu} \mu^{-r}  P(\mu|\nu -\tau;\tau)d\mu = &\frac{(\theta +2)}{\nu^{\theta +1}} \int_0^{\nu} \mu^{\theta-r}  d\mu \nonumber\\
 = &\frac{(\theta +2)}{\nu^{\theta +1}}  \frac{\mu^{\theta-r+1}}{\theta-r+1}\bigg|_{0}^{\nu}, \ \ \text{provided that}\  \theta-r+1>0\nonumber\\
  = &\frac{(\theta +2)}{\nu^{\theta +1}}  \frac{\nu^{\theta-r+1}}{\theta-r+1} \le \frac{(\theta +2)}{\theta-r +1}  \nu^{-r}=:\eta(r)\nu^{-\theta_1}.
\end{align*}

Next, we verify \eqref{distribution2s}, as follows: suppose $q>1$, then applying H\"{o}lder's inequality, we simplify
 \begin{align*}
 \int_0^{\nu} \chi_A(\mu) P(\mu|\nu-\tau;\tau)d\mu 
 \leq &\frac{(\theta +2)}{\nu^{\theta +1}} |A|^{\frac{q-1}{q}} \bigg[\int_0^{\nu}  \mu^{q\theta}d\mu \bigg]^{\frac {1} {q}}\nonumber\\
  = &\frac{(\theta +2)}  {(q\theta+1)^{ {1/q}} }  \frac{1 }{\nu^{\theta +1}} |A|^{\frac{q-1}{q}} \nu^{\theta+1/q},\ \ \text{where}\ \ \theta> -1/ q \nonumber\\
   = &\frac{(\theta +2)}  {(q\theta+1)^{ {1/q}} }  |A|^{\frac{q-1}{q}} \nu^{-\theta_1}=\Omega_1(|A|)\nu^{-\theta_1},
 \end{align*}
 where $\theta_1:=1-\frac{1}{q} \in (0, 2\sigma]$.\\

 Similarly, by applying H\"{o}lder's inequality for $p>1$, \eqref{distribution3s} can be verified as
 \begin{align*}
 \int_0^{\nu} \chi_A(\mu){\mu}^{-\sigma} P(\mu|\nu-\tau;\tau)d\mu
 \leq &(\theta +2)\frac{1}{\nu^{\theta +1}} |A|^{\frac{p-1}{p}} \bigg[\int_0^{\nu}  \mu^{p(\theta-\sigma)}d\mu \bigg]^{ \frac{1}{p}}\nonumber\\
  = &\frac{(\theta +2)}  {(p(\theta-\sigma)+1)^{ {1/p}} }  \frac{1 }{\nu^{\theta +1}} |A|^{\frac{p-1}{p}} \nu^{\theta-\sigma+1/p}\ \ \text{where}\ p(\theta-\sigma)+1>0\nonumber\\
   = &\frac{(\theta +2)}  {(p(\theta-\sigma)+1)^{ {1/p}} }  |A|^{\frac{p-1}{p}} \nu^{-\theta_2}=\Omega_2(|A|)\nu^{-\theta_2},
 \end{align*}
 where $\theta_2 :=1+\sigma-\frac{1}{p} \in (0, 2\sigma]$.\\

 In order to check \eqref{distribution4s}, we have
\begin{align*}
P(\mu|\nu;\tau)= (\theta +2)\frac{\mu^{\theta}}{(\nu+\tau)^{\theta +1}} \leq (\theta +2) \frac{\mu^{\theta}}{\lambda ^{1+\theta}}\leq  k'(\lambda) \mu^{-\tau_2}\ \text{for}\ \lambda < \nu +\tau,
\end{align*}
where, $\tau_2 \in [0,1)$  and  $k'(\lambda)\geq \frac{\theta +2}{\lambda^{1+\theta}}$.

Now, It is the right place to state the following existence and uniqueness results:
\begin{thm}\label{existmain theorem1}
Assume that \eqref{collisionKernels}--\eqref{distribution4s} hold. Then, for $g_{\text{in}}\in \mathcal{S}^+$, there exists a weak solution $g$ to (\ref{sccecfe})--(\ref{sin1}) on $[0, \infty)$.
\end{thm}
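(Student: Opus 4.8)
The plan is to construct the weak solution by the classical weak $L^1$ compactness method applied to a sequence of truncated problems. First I would introduce, for each integer $n\ge 1$, a cut-off collision kernel $\Psi_n(\mu,\nu):=\Psi(\mu,\nu)\chi_{(0,n)}(\mu)\chi_{(0,n)}(\nu)$ and a correspondingly truncated distribution term, and solve the resulting equation on the bounded domain $(0,n)$ (or on $\mathbb{R}_+$ with bounded kernels). Since the truncated kernels are bounded, a standard fixed-point / ODE-in-Banach-space argument in $L^1((0,n))$ yields a unique non-negative solution $g_n$, which moreover conserves mass because the truncated operators are built to respect \eqref{smass1}; one also gets that the total number $\mathcal{M}_0(g_n)$ stays controlled since coagulation decreases it and the breakage gain is bounded via \eqref{sTotal particles} and the growth bound \eqref{collisionKernels}. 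I would then establish the uniform-in-$n$ bounds that live in the space $\mathcal{S}$: the mass moment $\mathcal{M}_1(g_n(t))\le \mathcal{M}_1(g_{\text{in}})$, a bound on $\mathcal{M}_0(g_n(t))$, and — this is where the delicate assumptions enter — a uniform bound on the negative moment $\mathcal{M}_{-2\sigma}(g_n(t))$. The negative-moment estimate is obtained by testing the equation with $\mu^{-2\sigma}$: the coagulation gain-minus-loss combination is controlled using the superadditivity-type inequality $(\mu+\nu)^{-2\sigma}\le \mu^{-2\sigma}$ together with \eqref{collisionKernels}, while the breakage gain term is handled by \eqref{distribution1s} with $r=2\sigma$ (note $2\sigma<1$ so $\eta(2\sigma)$ is finite and the condition $\theta-r+1>0$ is exactly what was checked in the example), and \eqref{probabilitys} ensures the net effect on small particles does not blow up. Combining these gives $\sup_{n}\sup_{t\in[0,T]}\|g_n(t)\|_{\mathcal{S}}<\infty$ for every $T$.

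Next I would upgrade these pointwise-in-$t$ moment bounds to genuine weak $L^1$ compactness. For this one needs (a) tightness at infinity, which comes for free from the uniform $\mathcal{M}_1$ bound (mass cannot escape to $\mu=\infty$), (b) a de la Vallée–Poussin–type bound, i.e. a convex function $\Phi$ with $\Phi(r)/r\to\infty$ such that $\int \Phi(g_n(\mu,t))\,d\mu$ stays bounded — obtained in the usual way from the convexity structure of the coagulation–breakage operators and the refined superlinear integrability of $g_{\text{in}}$ (or, alternatively, by a direct argument showing the sequence is uniformly integrable using \eqref{distribution2s}, which precisely rules out concentration of $g_n$ on small sets near the origin), and (c) uniform integrability near $\mu=0$, which is exactly the role of the uniform $\mathcal{M}_{-2\sigma}$ bound together with \eqref{distribution2s}–\eqref{distribution3s}. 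Then a refined version of the Dunford–Pettis theorem (as used by Laurençot–Wrzosek and Stewart) gives a subsequence with $g_n(t)\rightharpoonup g(t)$ in $L^1(\mathbb{R}_+)$ for each $t$. To pass from pointwise-in-$t$ convergence to convergence that respects the time integral, I would show time equicontinuity of $t\mapsto g_n(t)$ in a weak topology (using the equation and the uniform bounds to estimate $\|g_n(t)-g_n(s)\|$ against suitable test functions), and then invoke a variant of the Arzelà–Ascoli theorem for functions valued in $(L^1,\text{weak})$ to get $g_n\to g$ in $C([0,T];w\text{-}L^1)$ along a subsequence; a diagonal argument handles $T\to\infty$.

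The final step is to pass to the limit in the weak formulation of Definition~\ref{sdef1}. The coagulation death term $\mathcal{D}_{cb}$ and the coagulation birth term $\mathcal{B}_c$ are quadratic, so one must justify that weak-times-weak convergence of products integrates correctly; this is done by the standard trick of splitting the $\nu$-integral into a bounded region (where one uses that $\Psi_n\to\Psi$ uniformly on compacts and a product-of-weakly-convergent-sequences-against-a-bounded-kernel lemma, e.g. the one in Stewart) and a tail region $\{\nu>R\}$ whose contribution is made small uniformly in $n$ using the uniform $\mathcal{M}_1$ bound and $\omega<1$. The breakage birth term $\mathcal{B}_b$ involves a double integral with the singular distribution $P$; here I would use \eqref{distribution4s} to control the contribution from large $\nu+\tau$ producing small $\mu$, \eqref{distribution3s} to control the behaviour near the origin, and again a compact/tail splitting. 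I expect the main obstacle to be precisely this limit passage in the breakage term under the simultaneous singularities of $\Psi$ (on both axes) and $P$ (at $\mu=0$): one must verify that none of the truncation removals leak mass or number, which is where the carefully tailored assumptions \eqref{probabilitys}, \eqref{distribution1s}–\eqref{distribution4s} and the constraints $\sigma<1/2$, $\omega-\sigma<1$, $\tau_2+\sigma<1$ are all consumed. Once the limit identities hold, non-negativity is preserved under weak limits, continuity $g:[0,\infty)\to\mathcal{S}^+$ follows from the weak formulation together with the uniform $\mathcal{S}$-bound, and the finiteness of the integrals in part $(i)$ of Definition~\ref{sdef1} follows from the same estimates, completing the proof.
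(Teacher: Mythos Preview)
Your overall strategy matches the paper's almost exactly: truncate, get approximate solutions, prove uniform $\mathcal{S}$-bounds (including the $\mathcal{M}_{-2\sigma}$ moment via \eqref{distribution1s} and \eqref{probabilitys}), establish uniform integrability using \eqref{distribution2s}--\eqref{distribution4s}, prove time equicontinuity, extract a weak limit by Arzel\`a--Ascoli in $C([0,T];w\text{-}L^1)$, and pass to the limit term by term with a compact/tail splitting. The paper does precisely this, including the separate treatment of the sequence $\mu^{-\sigma}g^n$ to recover continuity into $\mathcal{S}^+$.

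There is, however, one concrete gap in your setup. Your truncation $\Psi_n(\mu,\nu)=\Psi(\mu,\nu)\chi_{(0,n)}(\mu)\chi_{(0,n)}(\nu)$ does \emph{not} yield a bounded kernel: the singularity $(\mu+\nu)^{-\sigma}$ at the origin survives, so the claim ``since the truncated kernels are bounded, a standard fixed-point/ODE argument \ldots'' fails as written. The paper's truncation is $\Psi_n(\mu,\nu)=\Psi(\mu,\nu)$ on $\{\mu+\nu<n,\ \mu\ge 1/n,\ \nu\ge 1/n\}$ and zero otherwise, together with truncated initial data $g_{\text{in}}\chi_{(1/n,n)}$; the cut-off from below is exactly what makes $\Psi_n$ bounded and the local existence machinery applicable. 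This is an easy fix, but without it the very first step does not go through. A smaller point: for uniform integrability the paper does not use a de~la~Vall\'ee--Poussin function but runs a direct Gronwall argument on $\rho^n(\delta,t)=\sup_{|A|\le\delta}\int_A(1+\mu^{-\sigma})g^n\,d\mu$, feeding in \eqref{distribution2s}--\eqref{distribution4s}; your alternative ``direct argument'' is the one that actually works here, whereas propagating a superlinear functional $\int\Phi(g^n)$ through the singular breakage term would require additional structure you have not secured.
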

\begin{thm}\label{uniquemain theorem1}
Assume that the collision kernel $\Psi$ satisfies \eqref{collisionKerneluniques} $\&$ \eqref{probabilitys}--\eqref{distribution4s} hold. Suppose $g_{\text{in}}\in \mathcal{S}^+$. Then, (\ref{sccecfe})--(\ref{sin1}) has a unique weak solution on $[0, \infty)$.
\end{thm}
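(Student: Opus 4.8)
\textbf{Proof plan for Theorem \ref{uniquemain theorem1}.}
The plan is to argue uniqueness by a Gronwall-type stability estimate for the difference of two weak solutions, following the strategy of Stewart \cite{Stewart:1990}. Suppose $g$ and $h$ are two weak solutions on $[0,T]$ with the same initial datum $g_{\text{in}}\in\mathcal{S}^+$, and set $G:=g-h$. I would work with the weighted weak formulation: for a suitable test function $\varphi$ (to be chosen as $\varphi(\mu)=\mathrm{sgn}(G(\mu,t))\,(1+\mu+\mu^{-2\sigma})$ or, more carefully, its truncated/regularized version, since the $L^1$-space carries the weight $1+\mu+\mu^{-2\sigma}$), write down the evolution of $\int_0^\infty \varphi(\mu)\,G(\mu,t)\,d\mu$. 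Because both $\mathcal{B}_c$ and $\mathcal{B}_b$ are quadratic in the solution, each bilinear term splits via $g g - h h = G g + h G$, so the difference equation is linear in $G$ with coefficients bounded in terms of $g$ and $h$.

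The key steps, in order, are: (1) record the a priori bounds on weak solutions coming from Section 3 — in particular that $t\mapsto \|g(t)\|_{\mathcal{S}}$ and $t\mapsto\|h(t)\|_{\mathcal{S}}$ are finite and bounded on $[0,T]$, which is what makes the coefficient functions integrable; (2) insert the splitting $gg-hh=Gg+hG$ into the weak forms of $\mathcal{B}_c(g)-\mathcal{B}_c(h)$, $\mathcal{D}_{cb}(g)-\mathcal{D}_{cb}(h)$ and $\mathcal{B}_b(g)-\mathcal{B}_b(h)$; (3) use the Fubini/symmetrization manipulations standard for coagulation equations to rewrite $\int\varphi\,[\mathcal{B}_c-\mathcal{D}_{cb}]$ as a double integral whose integrand involves $\varphi(\mu+\nu)-\varphi(\mu)-\varphi(\nu)$, and likewise handle $\mathcal{B}_b$ using \eqref{smass1}, \eqref{sTotal particles} and the distribution assumptions \eqref{distribution1s}--\eqref{distribution4s}; (4) bound the resulting integrand with the weight-type test function, using the stronger kernel bound \eqref{collisionKerneluniques} $\Psi(\mu,\nu)\le k(\mu+\nu)^{-\sigma}$ together with the elementary inequality $(\mu+\nu)^{-\sigma}\le \mu^{-\sigma}$ (and symmetrically $\le\nu^{-\sigma}$), so that the singular factor is absorbed by the $\mu^{-2\sigma}$ part of the weight and the growth by the $1+\mu$ part; (5) assemble everything into an inequality of the form
\begin{align*}
\frac{d}{dt}\int_0^\infty (1+\mu+\mu^{-2\sigma})|G(\mu,t)|\,d\mu \le C(T)\int_0^\infty (1+\mu+\mu^{-2\sigma})|G(\mu,t)|\,d\mu,
\end{align*}
with $C(T)$ depending only on $k$, $\sigma$, the constants in \eqref{probabilitys}--\eqref{distribution4s}, and $\sup_{[0,T]}(\|g\|_{\mathcal{S}}+\|h\|_{\mathcal{S}})$; (6) conclude by Gronwall's lemma that $\|G(t)\|_{\mathcal{S}}=0$ on $[0,T]$, and then extend from $[0,T]$ to $[0,\infty)$ since $T$ was arbitrary.

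I expect the main obstacle to be step (3)–(4): making the formal choice $\varphi=\mathrm{sgn}(G)\cdot(\text{weight})$ rigorous. The sign function is not an admissible test function, so one must either truncate the volume variable to $(\varepsilon,R)$ and regularize $\mathrm{sgn}$, derive the estimate there, and pass to the limit $\varepsilon\to0$, $R\to\infty$ using the finiteness of the integrals in Definition \ref{sdef1} and the $\mathcal{S}$-bounds; or invoke a duality/contraction argument. The delicate point is controlling the birth term $\mathcal{B}_b$ near $\mu=0$: one needs the negative-moment weight $\mu^{-2\sigma}$ to dominate the singularities produced by $P$, which is exactly why assumptions \eqref{distribution2s}, \eqref{distribution3s} restrict $\theta_1,\theta_2\in(0,2\sigma]$ and \eqref{distribution4s} imposes $\tau_2+\sigma<1$; verifying that these exponents line up so that every term is controlled by $\|G\|_{\mathcal{S}}$ (and in particular that no term needs a moment of order higher than $1$ or more negative than $-2\sigma$, which would not be available) is the crux of the computation. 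The restriction $\sigma<1/2$ guarantees $\mu^{-2\sigma}$ is locally integrable, keeping $\mathcal{S}$ a genuine subspace of $L^1_{\mathrm{loc}}$.
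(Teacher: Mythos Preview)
Your plan follows the same Stewart--type Gronwall strategy as the paper and would work, but the paper makes two simplifications worth noting. First, the paper does not use the full $\mathcal{S}$-weight $(1+\mu+\mu^{-2\sigma})$; it works instead with the lighter weight $(1+\mu^{-\sigma})$, setting $\Xi^{n}(t):=\int_{0}^{n}(1+\mu^{-\sigma})|Z(\mu,t)|\,d\mu$ and letting $n\to\infty$. This suffices because the restricted kernel \eqref{collisionKerneluniques} carries no growth at infinity (so no linear weight is needed) and has singularity only $(\mu+\nu)^{-\sigma}$, which closes via $(\mu+\nu)^{-\sigma}\le\nu^{-\sigma}\le 1+\nu^{-\sigma}$; the sign function is handled directly through the pointwise bound $Q(\mu,\nu,s)Z(\nu,s)\le 2(1+\mu^{-\sigma})|Z(\nu,s)|$, without any regularization. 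Second, for the collisional breakage birth term $\mathcal{B}_b$ the paper needs only \eqref{sTotal particles} and \eqref{distribution1s} with $r=\sigma$; the equi-integrability-type hypotheses \eqref{distribution2s}--\eqref{distribution4s} are used solely in the existence proof (Lemma \ref{compactlemma}) and play no role in uniqueness, so your expectation that ``verifying these exponents line up'' is the crux of the uniqueness computation is misplaced. Your heavier weight would still close (since $\nu^{-\sigma}\le 1+\nu^{-2\sigma}$ and $(\nu+\tau)^{1-\sigma}\le C(1+\nu)(1+\tau)$), but it introduces unnecessary bookkeeping; the paper's choice shows that the uniqueness argument lives in a smaller space than $\mathcal{S}$.
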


\section{Existence of weak solutions}\label{existexistence1}
In this section, first, we construct a sequence of functions $(\Psi_n)_{n=1}^{\infty} $ with compact support for each $n \geq 1$, such that
\begin{equation}\label{coalkernel}
\Psi_n(\mu, \nu):=\begin{cases}
\Psi(\mu, \nu),\ \ & \text{if}\  \  \mu+\nu < n\ \& \ \mu, \nu \geq 1/n, \\
\text{0},\ \ &  \ \ \ \text{otherwise}.
\end{cases}
\end{equation}
Next, we may follow as in \cite{Camejo:2015II} or \cite{Walker:2002} to show the truncated equation
\begin{align}\label{5trun cccecf}
\frac{\partial g^n }{\partial t}  = \mathcal{B}_{c}^n(g^n)-\mathcal{D}_{cb}^n (g^n)+  \mathcal{B}_{b}^n(g^n)
\end{align}
with initial data
\begin{equation}\label{5trunc ccecfin1}
g^{n}_0(\mu):=\begin{cases}
g_{\text{in}}(\mu),\ \ & \text{if}\ 1/n < \mu< n, \\
\text{0},\ \ &  \ \ \ \text{otherwise},
\end{cases}
\end{equation}
where
\begin{align*}
 & \mathcal{B}_{c}^n(g^n)(\mu, t):=\frac{1}{2} \int_{0}^{\mu}  E(\mu-\nu, \nu) \Psi_n(\mu-\nu, \nu) g^n(\mu-\nu, t)g^n(\nu, t)d\nu,\\
 &\mathcal{D}_{cb}^n(g^n)(\mu, t):= \int_{0}^{n-\mu} \Psi_n(\mu, \nu)g^n(\mu, t)g^n(\nu, t)d\nu, \nonumber\\
 & \mathcal{B}_{b}^n(g^n)(\mu, t):=\frac{1}{2} \int_{\mu}^{n}\int_{0}^{\nu} P(\mu |\nu-\tau; \tau) E'(\nu-\tau, \tau) \Psi_n(\nu-\tau, \tau) g^n(\nu-\tau, t)g^n(\tau, t)d\tau d\nu,
\end{align*}
has a unique non-negative solution $\hat{g}^n$. These family of solutions $(\hat{g}^n)_{n=1}^{\infty}$ lie in $ \mathcal{C}'([0, T];L^1(0,n))$. Additionally, it enjoys the mass conserving property for all $t\in[0, T]$, i.e.
\begin{eqnarray}\label{strunc mass1}
\int_{0}^{n}\mu \hat{g}^n(\mu, t)d\mu =\int_{0}^{n}\mu \hat{g}^n_{\text{in}}(\mu)d\mu.
\end{eqnarray}
 Furthermore, we extend the domain of truncated unique solution $\hat{g}^n$ by zero in $\mathbb{R}_{+} \times [0, T]$, as
\begin{equation}\label{5trunc soln}
\hat{g}^{n}(\mu, t):=\begin{cases}
g^n(\mu, t),\ \ & \text{if}\ 1/n < \mu < n, \\
\text{0},\ \ &  \text{if}\ \mu \geq n,
\end{cases}
\end{equation}
for $n\ge 1$ and $n \in \mathbb{N}$.\\

In the coming section, we wish to apply a classical weak compactness technique for the family of solutions $(g^n)_{n\in \mathds{N}}$ to obtain required weak solutions.

\subsection{ Weak compactness}\label{subs:wk}
Here we first show the equi-boundedness of the family $(g^n)_{n\in \mathds{N}} \subset \mathcal{S}^+ $ by proving the next lemma.
\begin{lem}\label{LemmaEquibounded}
Given any $T>0$. Then the following holds
\begin{eqnarray*}
 \int_{0}^{\infty}(1+\mu+\mu^{-2\sigma})g^n(\mu, t)d\mu \leq \mathcal{P}(T)\ \ \text{for}\   n\geq 1\  \text{and all} \ \ t\in [0,T],
\end{eqnarray*}
where $\mathcal{P}(T)$ is a positive constant depending on $T$.
\end{lem}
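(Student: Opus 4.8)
The plan is to control the three natural pieces of the weighted norm --- the number $\mathcal{M}_0$, the mass $\mathcal{M}_1$, and the negative moment $\mathcal{M}_{-2\sigma}$ --- separately, and then combine them. The mass term is free: by the mass-conservation identity \eqref{strunc mass1} we have $\int_0^n \mu\, \hat g^n(\mu,t)\,d\mu = \int_0^n \mu\, g^n_{\text{in}}(\mu)\,d\mu \le \|g_{\text{in}}\|_{\mathcal{S}}$ for all $t$, uniformly in $n$. So the real work is to bound $\int_0^\infty (1+\mu^{-2\sigma}) g^n(\mu,t)\,d\mu$.

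The first step is to write down the time derivative of $\int_0^n \phi(\mu) g^n(\mu,t)\,d\mu$ for a weight $\phi$, using \eqref{5trun cccecf} and the usual Fubini manipulations on the truncated operators. For the coagulation and collisional-death terms one gets the standard symmetrized kernel expression involving $\phi(\mu+\nu) - \phi(\mu) - \phi(\nu)$ weighted by $E\Psi_n$; for the birth-from-breakage term one gets a factor $\int_0^{\nu+\tau}\phi(\mu) P(\mu|\nu;\tau)\,d\mu$ weighted by $E'\Psi_n$. I would carry this out first with $\phi(\mu)=1$ (to control $\mathcal{M}_0$): coagulation contributes a nonpositive term (since $E\le 1$ and two particles become one), while the breakage birth term contributes $\tfrac12\int\!\!\int E'\Psi_n g^n g^n \big(\int_0^{\nu+\tau} P(\mu|\nu;\tau)\,d\mu\big)$. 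Here is where assumption \eqref{sTotal particles} together with \eqref{probabilitys} and \eqref{distribution1s} is designed to be used: splitting the $(\nu,\tau)$ integral according to whether both particles are small ($<1$) or not, on the "both small" region the total-number output $N$ is compensated by the lower bound on $E$ in \eqref{probabilitys} so that the net $\mathcal{M}_0$-production is controlled, and on the complementary region $\Psi_n$ is bounded using \eqref{collisionKernels} by a bounded-times-integrable quantity against $\mathcal{M}_1$, $\mathcal{M}_0$ and the negative moment. One should get $\tfrac{d}{dt}\mathcal{M}_0^n \le C\big(1 + (\mathcal{M}_0^n)^2 + \mathcal{M}_{-2\sigma}^n \mathcal{M}_0^n + \dots\big)$, or something that closes once combined with the other moments.

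Next, take $\phi(\mu) = \mu^{-2\sigma}$. The key structural point for the coagulation part is that $(\mu+\nu)^{-2\sigma} \le \mu^{-2\sigma}$, so the term $\phi(\mu+\nu)-\phi(\mu)-\phi(\nu) \le -\phi(\nu) \le 0$ except that we must check integrability near the axes; the singular bound \eqref{collisionKernels} with $\sigma<1/2$, $\omega<1$, $\omega-\sigma<1$ and the inequality $\Psi_n(\mu,\nu)(\mu+\nu)^{-2\sigma}$-type estimates are exactly what makes $\int\!\!\int E\Psi_n g^n g^n \mu^{-2\sigma}$ finite and bounded by $C(\mathcal{M}_0^n + \mathcal{M}_1^n)\mathcal{M}_{-2\sigma}^n$ plus lower-order terms --- here one splits the $\mu,\nu$ integration into $\{\mu,\nu<1\}$, $\{\mu<1<\nu\}$ etc. and uses $(1+\mu)^\omega\le 2$ on the unit cube and $(1+\mu)^\omega \le 2\mu^\omega \le 2\mu$ for $\mu\ge1$ (since $\omega<1$). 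For the breakage-birth term with $\phi=\mu^{-2\sigma}$ one needs $\int_0^{\nu+\tau}\mu^{-2\sigma}P(\mu|\nu;\tau)\,d\mu \le \eta(2\sigma)(\nu+\tau)^{-2\sigma}$ from \eqref{distribution1s} with $r=2\sigma$ (note $2\sigma<1$ so the hypothesis $\theta-r+1>0$-type restriction is compatible), and then $E'\Psi_n(\nu,\tau)(\nu+\tau)^{-2\sigma}$ is again bounded via \eqref{collisionKernels} against a product of moments. Assembling: $\tfrac{d}{dt}\mathcal{M}_{-2\sigma}^n \le C\big(\mathcal{M}_{-2\sigma}^n + (\mathcal{M}_{-2\sigma}^n)^2 + \text{lower order}\big)$.

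Finally I would add the differential inequalities for $\mathcal{M}_0^n$ and $\mathcal{M}_{-2\sigma}^n$ (the mass $\mathcal{M}_1^n$ being already a fixed constant), set $y_n(t) := \mathcal{M}_0^n(t) + \mathcal{M}_{-2\sigma}^n(t)$, and obtain an inequality of the form $y_n'(t) \le C(1 + y_n(t) + y_n(t)^2)$ with $C$ independent of $n$ and $y_n(0) \le \|g_{\text{in}}\|_{\mathcal{S}}$. A Riccati / Gronwall comparison then gives $y_n(t) \le \mathcal{P}(T)$ on $[0,T]$ --- strictly speaking one first gets a bound on a maximal subinterval $[0,T^\ast)$ and then, since the bound does not blow up before $T$ if $T$ is below the blow-up time of the comparison ODE, one may need to observe that the quadratic terms actually come with a favorable sign (from coagulation) or are dominated, so that $y_n$ in fact satisfies a \emph{linear} differential inequality $y_n' \le C(1+y_n)$ and the bound is global; this is the point I expect to require the most care in bookkeeping. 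Adding back the constant mass term yields $\int_0^\infty(1+\mu+\mu^{-2\sigma})g^n(\mu,t)\,d\mu \le \mathcal{P}(T)$, which is the claim. The main obstacle is the estimation of the collisional-breakage birth term's contribution to the negative moment and to $\mathcal{M}_0$: one must exploit the interplay of \eqref{probabilitys}, \eqref{sTotal particles} and \eqref{distribution1s} to show that although breakage increases particle number, the increase is quantitatively controlled, and simultaneously keep every integral near the singular axes finite using the precise exponent constraints in \eqref{collisionKernels}.
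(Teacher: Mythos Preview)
Your plan uses the right ingredients --- mass conservation, assumption \eqref{distribution1s} with $r=2\sigma$ for the breakage-birth contribution to the negative moment, and the lower bound \eqref{probabilitys} on $E$ to control the small-particle region --- and would ultimately succeed. But it is more elaborate than necessary, and the one point you yourself flag as delicate (linear versus Riccati) is left unresolved. The paper's route removes both issues at once.

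The paper does \emph{not} estimate $\mathcal{M}_0^n$, $\mathcal{M}_1^n$, $\mathcal{M}_{-2\sigma}^n$ separately. Instead it observes that for $\mu\ge 1$ one has $1+\mu^{-2\sigma}\le 2\le 2\mu$, so
\[
\int_0^\infty(1+\mu+\mu^{-2\sigma})g^n(\mu,t)\,d\mu \;\le\; 2\int_0^1\mu^{-2\sigma}g^n(\mu,t)\,d\mu + 3\|g_{\text{in}}\|_{\mathcal S},
\]
and the whole lemma reduces to bounding the single quantity $X_n(t):=\int_0^1\mu^{-2\sigma}g^n(\mu,t)\,d\mu$; a separate $\mathcal{M}_0$ argument is never needed. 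Differentiating $X_n$ and bounding the breakage birth via \eqref{distribution1s}, the integrand on the region $\{\mu+\nu<1\}$ (where both colliding particles lie in $(0,1)$) carries the factor
\[
\Big[\tfrac12 E(\mu,\nu)+\tfrac{\eta(2\sigma)}{2}E'(\mu,\nu)-1\Big]\Psi_n(\mu,\nu),
\]
which is \emph{nonpositive} precisely by \eqref{probabilitys} applied with $r=2\sigma$; this is exactly the ``favourable sign'' you anticipated, made explicit. Every remaining term has at least one variable in $[1,n)$, so one factor of $g^n$ is absorbed by the conserved mass $\|g_{\text{in}}\|_{\mathcal S}$, yielding directly the \emph{linear} inequality $X_n'(t)\le a\,X_n(t)+b$ with $a,b$ depending only on $k,\omega,\sigma,\eta(2\sigma),\|g_{\text{in}}\|_{\mathcal S}$. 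Gronwall then gives a bound valid on any $[0,T]$, with no Riccati blow-up issue.

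In short: your decomposition into three moments is workable but redundant; restricting the negative moment to $(0,1)$ and using mass on $[1,\infty)$ collapses the argument to one scalar differential inequality that is manifestly linear.
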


\begin{proof}
 Let $t\in [0,T]$ and $n\geq 1$. For $n=1$, the proof of the lemma is trivial. Now, for $n\geq 2$, using \eqref{5trunc soln} and \eqref{strunc mass1}, consider the following integral as
 \begin{align}\label{sEquibound1}
\int_0^{\infty} (1+\mu +\mu ^{-2\sigma}) g^n(\mu, t)d\mu = &\int_{1/n}^1 (1+\mu^{-2\sigma}) g^n(\mu, t)d\mu \nonumber\\
&+\int_1^n (1+\mu^{-2\sigma}) g^n(\mu, t)d\mu +\int_{1/n}^n \mu g^n(\mu,t)d\mu \nonumber\\
\leq &2\int_0^1 \mu^{-2\sigma} g^n(\mu, t)d\mu +3\|g_{\text{in}}\|_{\mathcal{S}}.
\end{align}
Thanks to \eqref{5trunc ccecfin1} and $g_{\text{in}} \in  \mathcal{S}^+$. Next, multiplying with $\mu^{-2\sigma}$ in (\ref{5trun cccecf}), taking integration from $0$ to $1$ with respect to $\mu$ and applying Leibniz's rule, we get
\begin{align}\label{sEquibound2}
\frac{d}{dt}\int_0^1 \mu^{-2\sigma} g^n(\mu, t)d\mu =\int_0^1 \mu^{-2\sigma}  [\mathcal{B}_c^n(g^n)(\mu, t)-\mathcal{D}_{cb}^n(g^n)(\mu, t)+\mathcal{B}_b^n(g^n)(\mu, t)] d\mu.
\end{align}
First, applying integral to each term of (\ref{sEquibound2}), then using Fubini's theorem, the transformation $\mu-\nu={\mu}'$ $\&$ $\nu={\nu}'$, interchanging $\mu$ and $\nu$ and symmetry of $\Psi_n$ $\&$ $E$, the first term on the right-hand side of (\ref{sEquibound2}) can be written as
\begin{align}\label{sEquibound3}
\int_0^1 \mu^{-2\sigma}  \mathcal{B}_c^n(g^n)(\mu, t)d\mu 
 =&\frac{1}{2} \int_0^1\int_{0}^{1-\mu}  (\mu+\nu)^{-2\sigma} E(\mu, \nu)  \Psi_n(\mu, \nu)g^n(\mu, t)g^n(\nu, t)d\nu d\mu.
\end{align}
Again, using Fubini's theorem, the third term on the right-hand side of (\ref{sEquibound2}) can be simplified as
\begin{align}\label{sEquibound4}
\int_0^1 \mu^{-2\sigma} & \mathcal{B}_b^n(g^n)(\mu, t)d\mu \nonumber\\ 
 =&\frac{1}{2}  \int_0^1 \int_0^{\nu} \int_0^{\nu} \mu^{-2\sigma} P(\mu| \nu-\tau; \tau)  E'(\nu-\tau, \tau) \Psi_n(\nu-\tau, \tau) g^n(\nu-\tau, t)g^n(\tau, t)d\tau d\mu d\nu  \nonumber\\
&+\frac{1}{2}  \int_1^n \int_0^{1} \int_0^{\nu} \mu^{-2\sigma} P(\mu| \nu-\tau; \tau) E'(\nu-\tau, \tau) \Psi_n(\nu-\tau, \tau) g^n(\nu-\tau, t)g^n(\tau, t)d\tau d\mu d\nu  \nonumber\\
=&:J_1^n+J_2^n.
\end{align}

Let us consider the integral $J_1^n$ by applying the repeated applications of Fubini's theorem, \eqref{distribution1s}, the transformation $\nu-\tau=\nu'$ $\&$ $\tau=\tau'$ and finally replacing $\nu\rightarrow \mu$ $\&$ $\tau\rightarrow \nu$, we obtain
\begin{align*}
J_1^n 
 \le & \frac{\eta(2\sigma)}{2}   \int_0^1 \int_0^{1-\mu} (\mu+\nu)^{-2\sigma} E'(\mu, \nu) \Psi_n(\mu, \nu) g^n(\mu, t)g^n(\nu, t) d\nu d\mu.
\end{align*}

Next, we simplify $J_2^n$ by applying \eqref{distribution1s}, replacing $\tau$, $\nu$ by $\nu$, $\mu$ respectively, Fubini's theorem, and the transformation $\mu-\nu=\mu'$ $\&$ $\nu=\nu'$, to obtain
\begin{align*}
J_2^n
\leq & \frac{1}{2}  \int_1^n  \int_0^{\nu} \int_0^{\nu}  \mu^{-2\sigma} P(\mu| \nu-\tau; \tau) E'(\nu-\tau, \tau) \Psi_n(\nu-\tau, \tau) g^n(\nu-\tau, t)g^n(\tau, t)d\mu d\tau  d\nu \nonumber\\
\leq & \frac{\eta(2\sigma)}{2}  \int_0^1  \int_{1-\mu}^{n-\mu} (\mu+\nu)^{-2\sigma} E'(\mu, \nu) \Psi_n(\mu, \nu) g^n(\mu, t)g^n(\nu, t) d\nu d\mu \nonumber\\
& + \frac{\eta(2\sigma)}{2}  \int_1^n  \int_{0}^{n-\mu} (\mu+\nu)^{-2\sigma} E'(\mu, \nu) \Psi_n(\mu, \nu) g^n(\mu, t)g^n(\nu, t) d\nu d\mu.
\end{align*}

Substituting estimates on $J_1^n $ and $J_2^n $ into (\ref{sEquibound4}), we have
\begin{align}\label{sEquibound45}
\int_0^1 \mu^{-2\sigma}  \mathcal{B}_b^n(g^n)&(\mu, t)d\mu 
\leq  \frac{\eta(2\sigma)}{2} \int_0^n \int_0^{n-\mu} (\mu+\nu)^{-2\sigma}  E'(\mu, \nu) \Psi_n(\mu, \nu) g^n(\mu, t)g^n(\nu, t) d\nu d\mu.
\end{align}

Inserting (\ref{sEquibound3}) and (\ref{sEquibound45}) into (\ref{sEquibound2}), and using Fubini's theorem, we arrange (\ref{sEquibound2}), as

\begin{align}\label{sEquibound6}
\frac{d}{dt}  \int_0^1  \mu^{-2\sigma} & g^n(\mu, t)d\mu \nonumber\\
\leq & - \int_0^1\int_{0}^{1-\mu} \bigg[1-\frac{1}{2}  E(\mu, \nu) -\frac{\eta(2\sigma)}{2} E'(\mu, \nu) \bigg] \mu^{-2\sigma} \Psi_n(\mu, \nu)g^n(\mu, t) g^n(\nu, t)d\nu d\mu \nonumber\\
&- \int_0^1\int_{1-\mu}^{1} \bigg[1-\frac{\eta(2\sigma)}{2} E'(\mu, \nu) \bigg] \mu^{-2\sigma} \Psi_n(\mu, \nu)g^n(\mu, t)g^n(\nu, t)d\nu d\mu \nonumber\\
&- \int_0^1\int_{1}^{n-\mu} \mu^{-2\sigma} \Psi_n(\mu, \nu)g^n(\mu, t)g^n(\nu, t)d\nu d\mu \nonumber\\
&+ \frac{\eta(2\sigma)}{2}  \int_0^1  \int_{1}^{n} \mu^{-2\sigma} E'(\mu, \nu) \Psi_n(\mu, \nu)g^n(\mu, t)g^n(\nu, t)d\nu d\mu \nonumber\\
& + \frac{\eta(2\sigma)}{2}  \int_1^n  \int_{0}^{n} \mu^{-2\sigma}  E'(\mu, \nu)  \Psi_n(\mu, \nu)g^n(\mu, t)g^n(\nu, t)d\nu d\mu.
\end{align}
Applying \eqref{probabilitys} to (\ref{sEquibound6}). Using the negativity of the first, second  $\&$ the third terms on the right-hand side to (\ref{sEquibound6}) and then applying \eqref{collisionKernels}, (\ref{strunc mass1}) and \eqref{5trunc ccecfin1}, we obtain
\begin{align}\label{sEquibound7}
\frac{d}{dt} \int_0^1  \mu^{-2\sigma} g^n(\mu, t)& d\mu \leq k \frac{\eta(2\sigma)}{2}  \int_0^1  \int_{1}^{n} \mu^{-2\sigma}  \frac{ (1+\mu)^{\omega} (1+\nu)^{\omega}}{(\mu +\nu )^{\sigma}}  g^n(\mu, t)g^n(\nu, t)d\nu d\mu \nonumber\\
& + k \frac{\eta(2\sigma)}{2}  \int_1^n  \int_{0}^{1} \mu^{-2\sigma}   \frac{ (1+\mu)^{\omega} (1+\nu)^{\omega}}{(\mu +\nu )^{\sigma}}g^n(\mu, t)g^n(\nu, t)d\nu d\mu \nonumber\\
& +k \frac{\eta(2\sigma)}{2}  \int_1^n  \int_{1}^{n} \mu^{-2\sigma}   \frac{ (1+\mu)^{\omega} (1+\nu)^{\omega}}{(\mu +\nu )^{\sigma}}g^n(\mu, t)g^n(\nu, t)d\nu d\mu \nonumber\\
\leq & k \eta(2\sigma)   2^{2\omega} \int_0^1  \int_{1}^{n}  \mu^{-2\sigma} \nu  g^n(\mu, t)g^n(\nu, t)d\nu d\mu \nonumber\\
& + k\frac{\eta(2\sigma)}{2}  2^{2\omega} \int_1^n  \int_{1}^{n} \mu \nu g^n(\mu, t)g^n(\nu, t)d\nu d\mu \nonumber\\
\leq & k \eta(2\sigma)   2^{2\omega} \|g_{\text{in}}\|_{\mathcal{S}} \bigg[ \int_0^{1} \mu^{-2\sigma}  g^n(\mu, t) d\mu +\frac{1}{2} \|g_{\text{in}}\|_{\mathcal{S}} \bigg].
\end{align}
Again thanks to $ g_{\text{in}} \in   \mathcal{S}^+$. Then, an application of Gronwall's inequality to (\ref{sEquibound7}) gives
\begin{eqnarray}\label{sEquibound8}
\int_0^1 \mu^{-2\sigma} g^n(\mu, t)d\mu \leq \mathcal{P}_1(T),
\end{eqnarray}
where
\begin{eqnarray*}
\mathcal{P}_1(T):=e^{aT} \|g_{\text{in}}\|_{\mathcal{S}} +
\frac{b }{a} ( e^{aT } -1 ),
\end{eqnarray*}
 $a:=  k \eta(2\sigma)   2^{2\omega} \|g_{\text{in}}\|_{\mathcal{S}} $ and $b:= k \frac{\eta(2\sigma)}{2} 2^{2\omega} \|g_{\text{in}}\|_{\mathcal{S}}^2$.
Inserting (\ref{sEquibound8}) into (\ref{sEquibound1}), we thus have
 \begin{eqnarray*}
\int_0^{\infty} (1+\mu+\mu^{-2\sigma}) g^n(\mu, t)d\mu \leq 2\mathcal{P}_1(T)+3 \|g_{\text{in}}\|_{\mathcal{S}} := \mathcal{P}(T).
\end{eqnarray*}
This completes the proof of Lemma \ref{LemmaEquibounded}.
\end{proof}


Next, the equi-integrability of the family of functions $(g^n)_{n\in \mathds{N}} \subset  \mathcal{S}^+ $ is shown in the following
lemma to apply Dunford-Pettis theorem.
\begin{lem}\label{compactlemma}
Given any $T>0$. Then, followings are hold true:\\

$(i)$ for all $t\in[0,T]$ and for any given $\epsilon> 0$, there exists a positive constant $\lambda_\epsilon >0$ (depending on $\epsilon $) such that
\begin{eqnarray*}
\sup_{n\geq 1} \left\{ \int_{\lambda_\epsilon}^{\infty}(1+\mu^{-\sigma})g^n(\mu, t)d\mu \right\} \leq \epsilon,
\end{eqnarray*}
$(ii)$ for a given $\epsilon > 0$, there exists  $\delta_\epsilon>0$ (depending on $\epsilon$) such that, for every small measurable set
$A \subset \mathbb{R}_{+}$ with $|A|\leq \delta_\epsilon$, $n \ge 1$ and $t\in [0,T]$,
\begin{eqnarray*}
\int_{A}(1+{\mu}^{-\sigma})g^n(\mu, t)d\mu < \epsilon.
\end{eqnarray*}
\end{lem}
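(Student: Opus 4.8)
The plan is to deduce part $(i)$ directly from the uniform moment bound of Lemma~\ref{LemmaEquibounded}, and to obtain part $(ii)$ by a Gronwall argument for an auxiliary supremum, in which the truncated initial datum contributes an absolutely continuous term while assumptions \eqref{distribution2s}--\eqref{distribution4s} make the collisional-breakage birth term uniformly small in $|A|$. For $(i)$, since $1+\mu^{-\sigma}\le 2$ whenever $\mu\ge1$, for any $\lambda\ge1$ one has, by Lemma~\ref{LemmaEquibounded},
\[
\int_{\lambda}^{\infty}(1+\mu^{-\sigma})\,g^n(\mu,t)\,d\mu\le 2\int_{\lambda}^{\infty}g^n(\mu,t)\,d\mu\le\frac{2}{\lambda}\int_{0}^{\infty}\mu\,g^n(\mu,t)\,d\mu\le\frac{2\,\mathcal P(T)}{\lambda},
\]
so $\lambda_\epsilon:=\max\{1,\,2\mathcal P(T)/\epsilon\}$ works. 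For $(ii)$, fix $\epsilon>0$ and let $\lambda:=\lambda_{\epsilon/2}$ be the radius from $(i)$; then $\int_{A\cap[\lambda,\infty)}(1+\mu^{-\sigma})g^n(\mu,t)\,d\mu\le\epsilon/2$, so it suffices to estimate the contribution of $A\cap(0,\lambda)$, and we may assume $A\subset(0,\lambda)$. For such $A$ set $\mathcal Q^n_A(t):=\int_A(1+\mu^{-\sigma})g^n(\mu,t)\,d\mu$ and $H^n(t):=\sup\{\mathcal Q^n_B(t):B\subset(0,\lambda),\ |B|\le|A|\}$, which is finite (indeed $\le\mathcal P(T)$) by Lemma~\ref{LemmaEquibounded}. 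Since $g^n\in\mathcal C'([0,T];L^1(0,n))$ and the weight $(1+\mu^{-\sigma})\chi_A$ is bounded on $\mathrm{supp}\,g^n(\cdot,t)\subset(1/n,n)$, the map $t\mapsto\mathcal Q^n_A(t)$ is $C^1$ and, dropping the non-positive death term,
\[
\frac{d}{dt}\mathcal Q^n_A(t)\le\int_A(1+\mu^{-\sigma})\big[\mathcal B^n_c(g^n)+\mathcal B^n_b(g^n)\big](\mu,t)\,d\mu=:I_c+I_b .
\]

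I would estimate $I_c$ by changing variables $\mu-\nu\mapsto\mu$, using $E\le1$ and $\Psi_n\le\Psi\le k(1+\lambda)^{2\omega}(\mu+\nu)^{-\sigma}$ on the support of $\chi_A(\mu+\nu)$, together with the elementary bounds $(1+(\mu+\nu)^{-\sigma})(\mu+\nu)^{-\sigma}\le\nu^{-\sigma}(1+\mu^{-\sigma})$ and $\nu^{-\sigma}\le1+\nu^{-2\sigma}$; since for each fixed $\nu$ the resulting inner integral $\int_0^\infty\chi_A(\mu+\nu)(1+\mu^{-\sigma})g^n(\mu,t)\,d\mu$ is $\mathcal Q^n$ of a set of measure $\le|A|$, hence $\le H^n(t)$, this gives $I_c\le\tfrac12 k(1+\lambda)^{2\omega}\mathcal P(T)\,H^n(t)$. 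For $I_b$ I would use Fubini to bring the $\mu$-integral inside and bound $\int_0^{\nu}\chi_A(\mu)(1+\mu^{-\sigma})P(\mu|\nu-\tau;\tau)\,d\mu$ by $\Omega_1(|A|)\nu^{-\theta_1}+\Omega_2(|A|)\nu^{-\theta_2}$ when $\nu<\lambda$ (from \eqref{distribution2s}--\eqref{distribution3s}) and by $k'(\lambda)\int_A(\mu^{-\tau_2}+\mu^{-\sigma-\tau_2})\,d\mu$ when $\nu\ge\lambda$ (from \eqref{distribution4s}, applicable because the colliding pair has total size $\nu>\lambda$, and with $\tau_2+\sigma<1$ making the exponents locally integrable); then, after a further change of variables $\nu-\tau\mapsto\nu$ and using $\theta_i\le2\sigma$, $\omega<1$ and $\sigma<1/2$, all remaining weighted integrals of $g^n$ are controlled by $\mathcal P(T)$ via Lemma~\ref{LemmaEquibounded}, yielding $I_b\le C(\lambda,T)\,\varpi(|A|)$ with $\varpi(\delta):=\Omega_1(\delta)+\Omega_2(\delta)+k'(\lambda)\int_{(0,\delta)}(\mu^{-\tau_2}+\mu^{-\sigma-\tau_2})\,d\mu\to0$ as $\delta\to0$.

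Integrating in time and using $\mathcal Q^n_A(0)\le\int_A(1+\mu^{-\sigma})g_{\text{in}}(\mu)\,d\mu=:\alpha(|A|)$ with $\alpha(\delta)\to0$ as $\delta\to0$ (absolute continuity of the integral of $(1+\mu^{-\sigma})g_{\text{in}}\in L^1(\mathbb R_+)$, valid because $g_{\text{in}}\in\mathcal S$), we obtain, for every admissible $A$ with $|A|\le\delta$,
\[
\mathcal Q^n_A(t)\le\alpha(\delta)+T\,C(\lambda,T)\,\varpi(\delta)+\frac{k(1+\lambda)^{2\omega}\,\mathcal P(T)}{2}\int_0^t H^n(s)\,ds .
\]
The right-hand side does not depend on $A$, so taking the supremum over such $A$ and applying Gronwall's inequality on $[0,T]$ gives a bound $H^n(t)\le\big[\alpha(\delta)+T\,C(\lambda,T)\,\varpi(\delta)\big]\exp\big(\tfrac12 k(1+\lambda)^{2\omega}\mathcal P(T)\,T\big)$ that is independent of $n$ and of $t\in[0,T]$ and tends to $0$ as $\delta\to0$. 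Choosing $\delta_\epsilon$ so small that this bound is $<\epsilon/2$ and adding the tail estimate from $(i)$ yields $\int_A(1+\mu^{-\sigma})g^n(\mu,t)\,d\mu<\epsilon$, which is $(ii)$.

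The step I expect to be the main obstacle is the coagulation gain term $I_c$: after the change of variables the weight carries the constraint $\chi_A(\mu+\nu)$ rather than $\chi_A(\mu)$, so it cannot be bounded directly in terms of $\mathcal Q^n_A$; this is precisely what forces one to work with the supremum $H^n(t)$ over all sets of measure $\le|A|$ and to close the estimate by Gronwall. A secondary technical point is to verify that \eqref{distribution2s}--\eqref{distribution4s} jointly cover the two regimes $\nu<\lambda$ and $\nu\ge\lambda$ of the parent particle in $\mathcal B^n_b$, and that the standing exponent restrictions ($0\le\omega-\sigma<1$, $\sigma\in(0,1/2)$, $\theta_i\in(0,2\sigma]$, $\tau_2+\sigma<1$) keep every auxiliary integral finite and uniformly dominated by the bound $\mathcal P(T)$ of Lemma~\ref{LemmaEquibounded}.
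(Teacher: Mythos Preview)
Your proposal is correct and follows essentially the same route as the paper: part~$(i)$ via the uniform first-moment bound (the paper uses mass conservation \eqref{strunc mass1} directly rather than $\mathcal P(T)$, but this is immaterial), and part~$(ii)$ via a Gronwall argument for the supremum $\rho^n(\delta,t)=H^n(t)$ over sets of small measure, with the coagulation birth producing the Gronwall term and the breakage birth split into the regimes $\nu<\lambda$ (handled by \eqref{distribution2s}--\eqref{distribution3s}) and $\nu\ge\lambda$ (handled by \eqref{distribution4s}). The only cosmetic difference is that for the $\nu\ge\lambda$ contribution the paper bounds $\int_A\mu^{-\tau_2}\,d\mu$ and $\int_A\mu^{-\sigma-\tau_2}\,d\mu$ via H\"older's inequality, whereas you use the rearrangement bound $\int_A\mu^{-\alpha}\,d\mu\le\int_0^{|A|}\mu^{-\alpha}\,d\mu$; both give a modulus tending to $0$ with $|A|$.
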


\begin{proof}
$(i)$ For $\epsilon> 0$, let $ \epsilon:= \frac {(\lambda_{\epsilon}^{\sigma}+1)\int_{\lambda_\epsilon}^{\infty}\mu g_{\text{in}}(\mu)d\mu } { \lambda_{\epsilon}^{1+\sigma} }$.
 Then, by (\ref{strunc mass1}), for each $n\ge 1$, $g_{\text{in}} \in  \mathcal{S}^+ $ and for all $t\in[0,T]$, we have
\begin{eqnarray*}
\int_{\lambda_\epsilon}^{\infty}(1+\mu^{-\sigma})g^n(\mu, t)d\mu
 \leq \bigg[  \frac{1}{\lambda_\epsilon}+\bigg(\frac{1}{\lambda_\epsilon}\bigg)^{\sigma+1} \bigg]
\int_{\lambda_\epsilon}^{\infty}\mu g_{\text{in}}(\mu)d\mu  = \epsilon.
\end{eqnarray*}
Therefore, we obtain
\begin{eqnarray*}
\sup_{n\ge 1} \left\{ \int_{\lambda_\epsilon}^{\infty} (1+\mu^{-\sigma}) g^n(\mu, t) d\mu \right\}\leq \epsilon.
\end{eqnarray*}
This proves the first part of Lemma \ref{compactlemma}.\\

$(ii)$ Let $ \epsilon > 0$ be given. For $A \subset (0, \lambda)$, we can choose $\lambda$ such that $ \lambda \in [1, n)$ for all
$n\in \mathds{N}$ and $t \in [0,T]$, and using Lemma \ref{compactlemma} $(i)$, we thus have
\begin{eqnarray}\label{comp3 2}
\int_{\lambda}^{\infty}(1+\mu^{-\sigma})g^n(\mu, t)d\mu <  \frac {\epsilon} {2}.
\end{eqnarray}
 Fix $\lambda \ge 1$, for $n\ge 1$, $\delta \in (0, 1)$ and $t\in [0,T]$, we define
\begin{eqnarray*}
 {\rho}^n(\delta,t):=\sup \left\{\int_{0}^{\lambda} \chi_{A}(\mu)(1+\mu^{-\sigma})g^n(\mu, t)d\mu\ :\ A\subset (0,\lambda) \ \ \text{and} \ \
|A|\leq \delta \right\}.
\end{eqnarray*}
For $n\ge 1$ and $t\in [0,T]$, we estimate the following term by using (\ref{5trun cccecf}), Leibniz's rule, the non-negativity of $g^n$, Fubini's theorem and the transformation $\mu-\nu=\mu'$ $\&$ $\nu=\nu'$, as
\begin{align}\label{scomp3 1}
&\frac{d}{dt}  \int_{0}^{\lambda}  \chi_{A}(\mu)  (1+\mu^{-\sigma}) g^n(\mu, t)d\mu\nonumber\\
\leq &\frac{1}{2}\int_{0}^{\lambda}\int_{0}^{\lambda-\mu}\chi_{A}(\mu+\nu)(1+(\mu+\nu)^{-\sigma})  \Psi_n(\mu, \nu)g^n(\mu, t)g^n(\nu, t)d\nu d\mu \nonumber\\
&+\frac{1}{2} \int_{0}^{\lambda}\int_{0}^{\nu}\int_{0}^{\nu}\chi_{A}(\mu)(1+\mu^{-\sigma})P(\mu|\nu-\tau;\tau)\Psi_n(\nu-\tau, \tau)g^n(\nu-\tau, t)g^n(\tau,t)d\mu d\tau d\nu \nonumber\\
&+\frac{1}{2} \int_{\lambda}^{n}\int_{0}^{\nu}\int_{0}^{\lambda} \chi_{A}(\mu)(1+\mu^{-\sigma}) P(\mu|\nu-\tau;\tau)\Psi_n(\nu-\tau, \tau)g^n(\nu-\tau, t)g^n(\tau,t)d\mu d\tau d\nu.
 \end{align}
 We denote the first, the second and the third terms on the right-hand side of (\ref{scomp3 1}) by $J_3^n(t)$, $J_4^n(t)$ and $J_5^n(t)$, respectively.
 Next, we estimate each $J_i^n(t)$, for $i=3,4,5$ separately. Now, $J_3^n(t)$ can be estimated, by using \eqref{collisionKernels} and Lemma
\ref{LemmaEquibounded} in a similar way as given in Camejo and Warnecke \cite{Camejo:2015II} with little modifications, as
\begin{eqnarray*}\label{J3}
J_3^n(t) \leq k \mathcal{P}(T)G(\lambda){\rho}^n (\delta , t),
\end{eqnarray*}
where $G(\lambda):=\frac{1}{2}(1+\lambda^{\sigma})(1+\lambda)^{2\omega}$. The term $J_4^n(t)$ is evaluated, by using \eqref{distribution2s} and \eqref{distribution3s}, as
\begin{align}\label{J4}
J_{4}^n(t) 
\leq & \frac{1}{2}\Omega_1(|A|) \int_{0}^{\lambda}\int_{0}^{\nu} \nu^{-\theta_1}\Psi_n(\nu-\tau, \tau)g^n(\nu-\tau, t)g^n(\tau,t)d\tau  d\nu \nonumber\\
& +\frac{1}{2} \Omega_2(|A|) \int_{0}^{\lambda}\int_{0}^{\nu}  \nu^{-\theta_2} \Psi_n(\nu-\tau, \tau)g^n(\nu-\tau, t)g^n(\tau,t)d\tau  d\nu.
\end{align}
Applying Fubini's theorem to \eqref{J4}, and using the transformation $\nu-\tau=\nu'$ $\&$ $\tau=\tau'$, \eqref{collisionKernels} and Lemma \ref{LemmaEquibounded} and , we have
\begin{align*}
J_{4}^n(t) 
\leq & \frac{1}{2}\Omega_1(|A|)k \int_{0}^{\lambda}\int_{0}^{\lambda } ( \nu+\tau)^{-\theta_1} \frac{(1+\nu)^{\omega} (1+ \tau)^{\omega}}{(\nu+\tau)^{\sigma}} g^n(\nu, t)g^n(\tau, t)d\nu d\tau \nonumber\\
& +\frac{1}{2} \Omega_2(|A|)k \int_{0}^{\lambda}\int_{0}^{\lambda}  (\nu+\tau)^{-\theta_2} \frac{(1+\nu)^{\omega} (1+ \tau)^{\omega}}{(\nu+\tau)^{\sigma}}g^n(\nu, t)g^n(\tau,t)d\nu  d\tau \nonumber\\
\leq & \frac{1}{2}\Omega_1(|A|)k (1+\lambda)^{2\omega} \int_{0}^{\lambda}\int_{0}^{\lambda }  \nu^{-\theta_1} \tau^{-\sigma} g^n(\nu, t)g^n(\tau, t)d\nu  d\tau \nonumber\\
& +\frac{1}{2} \Omega_2(|A|)k (1+\lambda)^{2\omega} \int_{0}^{\lambda}\int_{0}^{\lambda}  \nu^{-\theta_2} \tau^{-\sigma}g^n(\nu, t)g^n(\tau,t)d\nu d\tau \nonumber\\
\leq & \frac{1}{2} k (1+\lambda)^{2\omega} [\Omega_1(|A|)+\Omega_2(|A|) ] \mathcal{P}(T)^2.
\end{align*}

 Similarly, $J_5^n(t)$ can be estimated, by using repeated applications of Fubini's theorem,  \eqref{distribution4s},  $\nu-\tau=\nu'$ $\&$ $\tau=\tau'$, \eqref{collisionKernels} and Lemma \ref{LemmaEquibounded}, as
\begin{align}\label{J5}
J_{5}^n(t) 
\leq &\frac{1}{2} k'(\lambda) \int_{\lambda}^{n} \int_{0}^{\nu} \int_{0}^{\lambda}\chi_{A}(\mu) \mu^{-\tau_2} \Psi_n(\nu-\tau, \tau)g^n(\nu-\tau, t)g^n(\tau,t)d\mu d\tau d\nu \nonumber\\
&+\frac{1}{2} k'(\lambda) \int_{\lambda}^{n} \int_{0}^{\nu} \int_{0}^{\lambda} \chi_{A}(\mu) \mu^{-\sigma -\tau_2} \Psi_n(\nu-\tau, \tau)g^n(\nu-\tau, t)g^n(\tau,t)d\mu d\tau d\nu \nonumber\\
\leq &\frac{1}{2} k'(\lambda) k \int_{0}^{\lambda} \int_{\lambda-\tau}^{n-\tau} \int_{0}^{\lambda}\chi_{A}(\mu) \mu^{-\tau_2}  \frac{(1+\nu)^{\omega} (1+ \tau)^{\omega}}{(\nu+\tau)^{\sigma}} g^n(\nu, t)g^n(\tau,t) d\mu d\nu d\tau \nonumber\\
 &+\frac{1}{2} k'(\lambda) k \int_{\lambda}^{n} \int_{0}^{n-\tau} \int_{0}^{\lambda}\chi_{A}(\mu) \mu^{-\tau_2}  \frac{(1+\nu)^{\omega} (1+ \tau)^{\omega}}{(\nu+\tau)^{\sigma}} g^n(\nu, t)g^n(\tau,t) d\mu d\nu d\tau \nonumber\\
&+\frac{1}{2} k'(\lambda) k \int_{0}^{\lambda} \int_{\lambda-\tau}^{n-\tau} \int_{0}^{\lambda}\chi_{A}(\mu) \mu^{-\sigma-\tau_2} \frac{(1+\nu)^{\omega} (1+ \tau)^{\omega}}{(\nu+\tau)^{\sigma}} g^n(\nu, t)g^n(\tau,t) d\mu d\nu d\tau \nonumber\\
 &+\frac{1}{2} k'(\lambda) k \int_{\lambda}^{n} \int_{0}^{n-\tau} \int_{0}^{\lambda}\chi_{A}(\mu) \mu^{-\sigma-\tau_2}  \frac{(1+\nu)^{\omega} (1+ \tau)^{\omega}}{(\nu+\tau)^{\sigma}} g^n(\nu, t)g^n(\tau,t) d\mu d\nu d\tau\nonumber\\
\leq & \frac{1}{2}k'(\lambda) k \mathcal{P}(T)^2 [(1+\lambda)^{\omega}+ 1/{\lambda}^{\sigma}] \bigg[  \int_{0}^{\lambda}\chi_{A}(\mu) \mu^{-\tau_2}d\mu +\int_{0}^{\lambda}\chi_{A}(\mu) \mu^{-\sigma -\tau_2}d\mu \bigg].
 \end{align}

Now, for $ \frac{1+\sigma+\tau_2}{1-\sigma-\tau_2}>1$ and choose $u>1$ such that $ u\tau_2<1$, then applying H\"older's inequality to \eqref{J5}, we get
\begin{align*}
J_{5}^n(t) \leq &  \frac{1}{2}k'(\lambda) k \mathcal{P}(T)^2 [(1+\lambda)^{\omega}+ 1/{\lambda}^{\sigma}] \bigg\{  |A|^{\frac{u-1}{u}} \left( \int_0^{\lambda} \mu^{-u\tau_2} d\mu \right)^{\frac{1}{u}}\\
 &+  |A|^{\frac{1-\sigma -\tau_2}{1+\sigma +\tau_2}} \bigg( \int_{0}^{\lambda}  \mu^{-(1+\sigma +\tau_2)/2} d\mu \bigg)^{\frac{2(\sigma+\tau_2)}{1+\sigma +\tau_2}} \bigg\}\\
 \leq &   \frac{1}{2}k'(\lambda) k \mathcal{P}(T)^2 [(1+\lambda)^{\omega}+ 1/{\lambda}^{\sigma}] \bigg\{  {\delta}^{\frac{u-1}{u}}  \left(  \frac{\lambda^{1-u\tau_2}}   {{1-u\tau_2}} \right)^{\frac{1}{u}}\nonumber\\
  &~~~~~~~~~~~~~~~~~~~~~~~~~~+  {\delta}^{\frac{1-\sigma -\tau_2}{1+\sigma +\tau_2}}   \bigg( \frac{ \lambda^{(1-(\sigma +\tau_2))/2}} { (1-\sigma-\tau_2)/2 } \bigg) ^{ \frac{2(\sigma+\tau_2)}{(1+\sigma+\tau_2)}      } \bigg\}.
\end{align*}
Inserting the estimates on $J_{3}^n(t)$, $J_{4}^n(t)$ and $J_{5}^n(t)$ into (\ref{scomp3 1}), we obtain
\begin{align}\label{comp3 3}
 \frac{d}{dt}\int_{0}^{\lambda} \chi_{A}(\mu)  (1+\mu^{-\sigma})  g^n(\mu, t)d\mu \leq &k \mathcal{P}(T)G(\lambda)\rho^n (\delta , t)+ \frac{1}{2} k (1+\lambda)^{2\omega} [\Omega_1(|A|)+\Omega_2(|A|) ] \mathcal{P}(T)^2\nonumber\\
&+ \frac{1}{2}k'(\lambda) k \mathcal{P}(T)^2 [(1+\lambda)^{\omega}+ 1/{\lambda}^{\sigma}]  \bigg\{  {\delta}^{\frac{u-1}{u}}  \left(  \frac{\lambda^{1-u\tau_2}}   {{1-u\tau_2}} \right)^{\frac{1}{u}}\nonumber\\
  &+  {\delta}^{\frac{1-\sigma -\tau_2}{1+\sigma +\tau_2}}   \bigg( \frac{ \lambda^{(1-(\sigma +\tau_2))/2}} { (1-\sigma-\tau_2)/2 } \bigg) ^{ \frac{2(\sigma+\tau_2)}{(1+\sigma+\tau_2)}      } \bigg\}.
\end{align}

Next, integrating (\ref{comp3 3}) with respect to time from $0$ to $t$ and taking supremum over all $A$ such that $A \subset (0, \lambda)$ with $|A|$ $\leq \delta$ and using $ g_{\text{in}} \in \mathcal{S}^+$, we estimate
\begin{align*}
\rho^n(\delta, t) \leq & \rho^n(\delta, 0)+k \mathcal{P}(T)G(\lambda)\int_0^t \rho^n (\delta , s)ds + \frac{1}{2} k (1+\lambda)^{2\omega} [\Omega_1(|A|)+\Omega_2(|A|) ] \mathcal{P}(T)^2T\\
&+ \frac{1}{2}k'(\lambda) k \mathcal{P}(T)^2 T [(1+\lambda)^{\omega}+ 1/{\lambda}^{\sigma}]  \bigg\{  {\delta}^{\frac{u-1}{u}}  \left(  \frac{\lambda^{1-u\tau_2}}   {{1-u\tau_2}} \right)^{\frac{1}{u}}\nonumber\\
  &~~~~~~~~~~~~~~~~~~~~~~~~~~+  {\delta}^{\frac{1-\sigma -\tau_2}{1+\sigma +\tau_2}}   \bigg( \frac{ \lambda^{(1-(\sigma +\tau_2))/2}} { (1-\sigma-\tau_2)/2 } \bigg) ^{ \frac{2(\sigma+\tau_2)}{(1+\sigma+\tau_2)}      } \bigg\},\  \ t\in [0,T].
\end{align*}
Now, applying Gronwall's inequality, we obtain
\begin{eqnarray}\label{rhon}
\rho^n(\delta,t) \leq C_1 (\delta, \lambda) \exp(k \mathcal{P}(T)G(\lambda)T),\ \ \ t \in [0,T],
\end{eqnarray}
where
\begin{align*}
C_1 (\delta, \lambda):=&\rho^n(\delta, 0)+\frac{1}{2} k (1+\lambda)^{2\omega} [\Omega_1(|A|)+\Omega_2(|A|) ] \mathcal{P}(T)^2T
+  \frac{1}{2}k'(\lambda) k \mathcal{P}(T)^2T\\ & \times [(1+\lambda)^{\omega}+ 1/{\lambda}^{\sigma}] \bigg\{  {\delta}^{\frac{u-1}{u}}  \left(  \frac{\lambda^{1-u\tau_2}}   {{1-u\tau_2}} \right)^{\frac{1}{u}}\nonumber\\
  &~~~~~~~~~~~~~~~~~~~~~~~~~~+  {\delta}^{\frac{1-\sigma -\tau_2}{1+\sigma +\tau_2}}   \bigg( \frac{ \lambda^{(1-(\sigma +\tau_2))/2}} { (1-\sigma-\tau_2)/2 } \bigg) ^{ \frac{2(\sigma+\tau_2)}{(1+\sigma+\tau_2)}      } \bigg\}.
\end{align*}
From \eqref{rhon}, we obtain
\begin{eqnarray}\label{comp3 4}
\sup_{n}\{ \rho^n(\delta,t)\}\rightarrow 0\ \ \text{as}\ \ \delta \rightarrow 0.
\end{eqnarray}
Finally, adding (\ref{comp3 2}) and (\ref{comp3 4}), we obtain the desired result. This completes the proof of the second part of Lemma \ref{compactlemma}.
\end{proof}

Next, we turn to show the time equicontinuity of sequences $(g^n)_{n\in \mathbb{N}}$ and $(h^n)_{n\in \mathbb{N}} := (\mu^{-\sigma}g^n)_{n\in \mathbb{N}}$.

\subsection{Equicontinuity in time}\label{subs:equit}

Set $\Upsilon^n(\mu, t):=\mu^{-\zeta}g^n(\mu, t)$, for $\zeta =\{0, \sigma \}$. When $ \zeta =0$, $\Upsilon^n(\mu, t)=g^n(\mu, t)$  and $\zeta=\sigma$, $\Upsilon^n(\mu, t):=\mu^{-\zeta}g^n(\mu, t)$. Let $T>0$ be fixed, $\epsilon > 0$ and $\phi \in L^{\infty}(\mathbb{R}_{+})$ and consider $s,t\in [0,T]$
 with $t\geq s$. Next, fix $\lambda>1$ such that
\begin{eqnarray}\label{sequicontinuity1}
\frac{2 \mathcal{P}(T)}{\lambda^{1+\zeta}}< \frac{\epsilon}{2 \|\phi\|_{L^{\infty}}  },
\end{eqnarray}
where the constant $\mathcal{P}(T)$ is defined in Lemma \ref{LemmaEquibounded}. For each $n$, by Lemma \ref{LemmaEquibounded} and (\ref{sequicontinuity1}), we have
\begin{eqnarray}\label{sequicontinuity2}
\int_{\lambda}^{\infty}|\Upsilon^n(\mu, t)-\Upsilon^n(\mu, s)|d\mu \leq \frac{1}{\lambda}\int_{\lambda}^{\infty}
 \mu^{1-\zeta}[g^n(\mu, t)+g^n(\mu, s)]d\mu \leq \frac{2\mathcal{P}(T)}{\lambda^{1+\zeta}}< \frac{\epsilon}{2\|\phi\|_{L^{\infty}} }.
\end{eqnarray}
Multiplying by $\mu^{-\zeta} \phi(\mu)$ on both sides into (\ref{5trun cccecf}, integrating with respect to $\mu$ from $0$ to $\lambda$, using  Leibniz's rule and non-negativity of $g^n$, we simplify the following term as \begin{align}\label{sequicontinuity3}
 \bigg|\frac{d}{dt}  \int_{0}^{\lambda}  \phi(\mu) \Upsilon^n(\mu, t)d\mu \bigg|
\leq  \|\phi\|_{L^{\infty}} &\bigg[ \int_{0}^{\lambda} \mu^{-\zeta} \mathcal{B}^n_c(g^n)(\mu, t) d\mu
+ \int_{0}^{\lambda} \mu^{-\zeta} \mathcal{D}^n_{cb}(g^n)(\mu, t) d\mu \nonumber\\ 
&+ \int_{0}^{\lambda} \mu^{-\zeta} \mathcal{B}^n_b(g^n)(\mu, t) d\mu \bigg].
\end{align}
First, we estimate the following term by using Fubini's theorem, the transformation $\mu-\nu=\mu'$ $\&$ $\nu=\nu'$, \eqref{collisionKernels} and Lemma \ref{LemmaEquibounded}, as
\begin{eqnarray}\label{sequicontinuity31}
\int_{0}^{\lambda} \mu^{-\zeta} \mathcal{B}^n_c(g^n)(\mu, t) d\mu \leq \frac{1}{2} k (1+\lambda)^{2\omega}\mathcal{P}(T)^2.
\end{eqnarray}
Similarly, we evaluate the second integral of \eqref{sequicontinuity3}, as
\begin{eqnarray}\label{sequicontinuity32}
\int_{0}^{\lambda} \mu^{-\zeta} \mathcal{D}^n_{cb}(g^n)(\mu, t) d\mu \leq   k(1+\lambda)^{\omega} \mathcal{P}(T)^2.
\end{eqnarray}
Next, the last integral of \eqref{sequicontinuity3} is evaluated, by applying Fubini's theorem, \eqref{distribution1s}, \eqref{distribution4s}, as
\begin{align}\label{sequicontinuity33}
 \int_{0}^{\lambda} \mu^{-\zeta} \mathcal{B}^n_b(g^n)  (\mu, t) & d\mu  \leq \frac{1}{2} \int_{0}^{\lambda}\int_{\mu}^{n} \int_{0}^{\nu} \mu^{-\zeta}P(\mu|\nu -\tau;\tau) \Psi_n(\nu-\tau, \tau)g^n(\nu-\tau, t)g^n(\tau, t)d\tau d\nu d\mu \nonumber\\
 \leq & \frac{1}{2}  \eta(\zeta) \int_{0}^{\lambda}\int_{0}^{\nu}  \nu^{-\zeta}  \Psi_n(\nu-\tau, \tau)g^n(\nu-\tau, t)g^n(\tau, t)d\tau d\nu \nonumber\\
  +& \frac{1}{2} k'(\lambda) \int_{\lambda}^n \int_{0}^{\lambda} \int_{0}^{\nu} \mu^{-\zeta -\tau_2} \Psi_n(\nu-\tau, \tau)g^n(\nu-\tau, t)g^n(\tau, t)d\tau d\mu d\nu.
\end{align}
We estimate \eqref{sequicontinuity33} by applying Fubini's theorem, \eqref{collisionKernels}, replacing $\tau \rightarrow \nu$ $\&$ $\nu \rightarrow \mu$, the transformation $\mu -\nu=\mu'$ $\&$ $\nu=\nu'$, and Lemma \ref{LemmaEquibounded}, to have
 \begin{align}\label{sequicontinuity34}
 \int_{0}^{\lambda} \mu^{-\zeta} \mathcal{B}^n_b(g^n)(\mu, t) d\mu \leq & \frac{1}{2}  \eta(\zeta) k \int_{0}^{\lambda} \int_{0}^{\lambda- \mu}  (\mu+\nu)^{-\zeta}  \frac{(1+\mu)^{\omega} (1+\nu)^{\omega}}{(\mu+\nu)^{\omega}}  g^n(\mu, t)g^n(\nu, t)d\nu d\mu \nonumber\\
 & +\frac{1}{2} k'(\lambda) k\frac{\lambda^{1-\zeta -\tau_2}}{1-\zeta -\tau_2} \int_0^{\lambda}  \int_{\lambda-\nu}^{n-\nu}  \frac{(1+\mu)^{\omega} (1+\nu)^{\omega}}{(\mu+\nu)^{\omega}} g^n(\nu, t)g^n(\mu, t)d\mu d\nu \nonumber\\
  & +\frac{1}{2} k'(\lambda) k\frac{\lambda^{1-\zeta -\tau_2}}{1-\zeta -\tau_2} \int_{\lambda}^n  \int_{0}^{n-\nu}  \frac{(1+\mu)^{\omega} (1+\nu)^{\omega}}{(\mu+\nu)^{\omega}} g^n(\nu, t)g^n(\mu, t)d\mu d\nu \nonumber\\
\leq & \frac{1}{2}  \eta(\zeta) k (1+\lambda)^{\omega} \mathcal{P}(T)^2+k'(\lambda)  2^{\omega} k \frac{\lambda^{1-\zeta -\tau_2}}{1-\zeta -\tau_2} \mathcal{P}(T)^2.
\end{align}

Inserting the estimates on \eqref{sequicontinuity31}, \eqref{sequicontinuity32} and \eqref{sequicontinuity34} into (\ref{sequicontinuity3}) and then integrating with respect to time from $s$ to $t$, we obtain
\begin{eqnarray}\label{sequicontinuity4}
 \bigg| \int_{0}^{\lambda} \phi(\mu)[\Upsilon^{n}(\mu, t)-\Upsilon^{n}(\mu, s)]d\mu \bigg|  \leq  B_{2}(T) \| \phi \|_{L^{\infty} }(t-s),
\end{eqnarray}
where $B_{2}(T):=\frac{k}{2} (1+\lambda)^{\omega}\mathcal{P}(T)^2 [  (1+\lambda)^{\omega}+ 2 ]+ \frac{1}{2}  \eta(\zeta) k (1+\lambda)^{\omega} \mathcal{P}(T)^2+k'(\lambda) 2^{\omega}k \frac{\lambda^{1-\zeta -\tau_2}}{1-\zeta -\tau_2} \mathcal{P}(T)^2$.

Using (\ref{sequicontinuity2}) and (\ref{sequicontinuity4}), we get
\begin{equation}\label{equicontinuity5}
\left| \int_{0}^{\infty}\phi(\mu)[\Upsilon^n(\mu, t)-\Upsilon^n(\mu, s)] d\mu \right| \leq B_{2}(T)\ \|\phi\|_{L^{\infty}}\ (t-s) + \frac{\epsilon}{2}.
\end{equation}
Choose some suitably small number $\delta>0$ such that $t-s<\delta$. Then the estimate (\ref{equicontinuity5}) implies the equicontinuity of the family
$\{g^n(t), t\in[0,T]\}$ with respect to time variable $t$, in the topology $L^1(\mathbb{R}_{+})$. Then according to a refined version of the
\textit{Arzel\`{a}-Ascoli Theorem}, see \cite[Theorem 2.1]{Stewart:1989} or \textit{Arzel\`{a}-Ascoli Theorem} \cite[Appendix A8.5]{Ash:1972}, we conclude
 that there exist a subsequence (${\Upsilon^{n_k}}$) and a non-negative function $\Upsilon \in L^\infty((0,T);L^1(\mathbb{R}_{+}))$ such that
\begin{eqnarray*}
\lim_{n_k\to\infty} \sup_{t\in [0,T]}  \left| \int_0^\infty  [ \Upsilon^{n_k}(\mu, t) - \Upsilon(\mu, t)]\ \phi(\mu)\ d\mu \right| = 0, \label{vittel}
\end{eqnarray*}
for all $T>0$ and $\phi \in L^\infty(\mathbb{R}_{+})$. This implies that
\begin{eqnarray}\label{equicontinuityfinal}
  \Upsilon^{n_k}(t) \rightharpoonup \Upsilon(t)\ \ \text{in}\ \ L^1(\mathbb{R}_{+})\ \ \text{as}\ n \rightarrow \infty,
\end{eqnarray}
uniformly for all $t \in [0,T]$ to some $\Upsilon \in \mathcal{C}([0,T]; w-L^1(\mathbb{R}_{+}))$,  where  $\mathcal{C}([0, T]; w-L^1 (\mathbb{R}_{+}))$ is the space of all weakly continuous functions from $[0, T]$ to $L^1 (\mathbb{R}_{+})$. \\

 For $\zeta =0$ and $\zeta=\sigma $, we can conclude that there exist subsequences $(g^{n_k})_{k\in N}$ and $(h^{n_l})_{l\in N}$ such that
\begin{eqnarray}\label{equicontinuity6}
g^{n_k}(t) \rightharpoonup g(t)\ \ \text{in}\ \ L^1(\mathbb{R}_{+})\ \ \text{as}\ \ n_k \to \infty,
\end{eqnarray}
and
\begin{eqnarray}\label{equicontinuity7}
h^{n_l}(t) \rightharpoonup h(t)\ \ \text{in}\ \ L^1(\mathbb{R}_{+})\ \ \text{as}\ \ n_l \to \infty.
\end{eqnarray}

For any $ a>0, t\in [0,T], $ since we have $g^{n_k} \rightharpoonup g$, we thus obtain
\begin{align*}
\int_{1/a}^{a} \mu g(\mu, t)d\mu = \lim_{ n_k\to {\infty}} \int_{1/a}^{a} \mu g^{n_k}{(\mu, t)}d\mu \leq \|g_{\text{in}}\|_{\mathcal{S}} < \infty.
\end{align*}
and
\begin{align*}
\int_{1/a}^{a}\mu^{-\sigma}g(\mu, t)d\mu =
\lim_{ n_k\to {\infty}} \int_{1/a}^{a} \mu^{-\sigma} g^{n_k}{(\mu, t)}d\mu \leq \|g_{\text{in}}\|_{\mathcal{S}} < \infty.
\end{align*}
Using (\ref{strunc mass1}), the non-negativity of each $g^{n_k}$ and $g$, then as $ a \to \infty $ implies that $\int_0^{\infty}\mu g(\mu, t) d\mu \le \int_0^{\infty}\mu g_{\text{in}}(\mu) d\mu$ and $g \in \mathcal{S}^+ $.\\

Next, to show that the limit function $g$  constructed in (\ref{equicontinuity6}) is actually a weak solution to (\ref{sccecfe})--(\ref{sin1}) in an appropriate sense.
\subsection{Integral convergence}

In the following lemma, we wish to show the truncated integrals on the right-hand side of \eqref{5trun cccecf} convergence weakly to the original integrals on the right-hand of \eqref{sccecfe}.
\begin{lem}\label{Weakconvergencelemmasccbe}
Let $(g^{n})_{n\in \mathbb{N}}$ be a bounded sequence in $\mathcal{S}^+$ and $g\in \mathcal{S}^{+}$, where $\|g^n\|_{\mathcal{S}}  \leq \mathcal{P}(T)$ and $g^n\rightharpoonup g$ in $L^1(\mathbb{R}_{+} )$ as $n\to \infty $. Then, for each $a> 0$, we have
\begin{equation}\label{weakconvergences}
 \mathcal{B}_{c}^n(g^n) \rightharpoonup  \mathcal{B}_{c}(g),\ \mathcal{D}_{cb}^n(g^n)  \rightharpoonup  \mathcal{D}_{cb}(g)\  \text{and}\ \ \mathcal{B}_{b}^n(g^n)  \rightharpoonup  \mathcal{B}_{b}(g)\   \text{in} \  L^1(0, a)\  \text{as}\  n\to \infty.
\end{equation}
\end{lem}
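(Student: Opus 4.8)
The plan is to fix $a>0$ and a test function $\phi\in L^{\infty}(0,a)$, rewrite each of the three truncated operators tested against $\phi$ as a single bilinear integral in $g^{n}\otimes g^{n}$, and then pass to the limit by a truncation procedure combined with the Dunford--Pettis theorem. First I would record (from $\|g^{n}\|_{\mathcal S}\le\mathcal P(T)$, $g\in\mathcal S^{+}$, \eqref{collisionKernels} and \eqref{sTotal particles}) that all six functions in \eqref{weakconvergences} lie in $L^{1}(0,a)$, so it suffices to prove $\int_{0}^{a}\phi\,\mathcal B_{c}^{n}(g^{n})\,d\mu\to\int_{0}^{a}\phi\,\mathcal B_{c}(g)\,d\mu$ and its analogues for $\mathcal D_{cb}$ and $\mathcal B_{b}$. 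Applying Fubini's theorem and the substitutions already used in Section~\ref{existexistence1} ($\mu-\nu\mapsto\mu$, $\nu\mapsto\nu$ for $\mathcal B_{c}$; $\nu-\tau\mapsto\sigma$, $\tau\mapsto\tau$ for $\mathcal B_{b}$), each such quantity becomes $\iint_{D_{n}}K_{n}(\mu,\nu)\,g^{n}(\mu)g^{n}(\nu)\,d\mu\,d\nu$, where $K_{n}=\tfrac12\phi(\mu+\nu)\chi_{\{\mu+\nu<a\}}E(\mu,\nu)\Psi_{n}(\mu,\nu)$ and $D_{n}=\{\mu+\nu<a\}$ for $\mathcal B_{c}$; $K_{n}=\phi(\mu)\Psi_{n}(\mu,\nu)$ and $D_{n}=\{\mu<a,\ \mu+\nu<n\}$ for $\mathcal D_{cb}$; and $K_{n}=\tfrac12 E'(\sigma,\tau)\Psi_{n}(\sigma,\tau)\Phi(\sigma,\tau)$ with $\Phi(\sigma,\tau):=\int_{0}^{a}\phi(\mu)P(\mu|\sigma;\tau)\,d\mu$ and $D_{n}=\{\sigma+\tau<n\}$ for $\mathcal B_{b}$. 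The same computation applied to the limit objects (legitimate since $g\in\mathcal S^{+}$) shows the desired limits are the corresponding integrals with $\Psi_{n}$ replaced by $\Psi$, $g^{n}$ by $g$ and $D_{n}$ by the full domain. In all three cases $|K_{n}|\le C_{\phi}(1+\mu)^{\omega}(1+\nu)^{\omega}(\mu+\nu)^{-\sigma}$: for $\mathcal B_{b}$ this uses $E'\le1$ and $|\Phi(\sigma,\tau)|\le\|\phi\|_{L^{\infty}}\int_{0}^{\sigma+\tau}P(\mu|\sigma;\tau)\,d\mu=N\|\phi\|_{L^{\infty}}$ by \eqref{sTotal particles}, with $\Phi$ measurable by Fubini.

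The passage to the limit I would carry out in three reductions, all uniform in $n$. \emph{(a) Volume truncation.} For $R$ large, the contribution of $D_{n}\setminus\{\mu+\nu\le R\}$ is $O(\varepsilon)$: on $\{\mu\ge R/2\}$ (and symmetrically $\{\nu\ge R/2\}$) one bounds $(\mu+\nu)^{-\sigma}\le\nu^{-\sigma}$ to factorize, uses $\int_{0}^{\infty}(1+\nu)^{\omega}\nu^{-\sigma}g^{n}(\nu)\,d\nu\le C\mathcal P(T)$ (split at $\nu=1$: $\nu^{-\sigma}\le\nu^{-2\sigma}$ for $\nu\le1$, $(1+\nu)^{\omega}\nu^{-\sigma}\le2^{\omega}\nu$ for $\nu\ge1$ since $\omega-\sigma<1$), and bounds $\int_{R/2}^{\infty}(1+\mu)^{\omega}g^{n}(\mu)\,d\mu\le2^{\omega}\big(\int\mu g^{n}\big)^{\omega}\big(\int_{R/2}^{\infty}g^{n}\big)^{1-\omega}\le C\,\mathcal P(T)^{\omega}\varepsilon^{1-\omega}$ by H\"older's inequality together with the first-moment bound and the $L^{1}$-tail control of Lemma~\ref{compactlemma}$(i)$; the same bound holds for $g$. (For $\mathcal B_{c}$ this step is vacuous with $R=a$.) \emph{(b) Removing the $n$-dependence of the kernel.} On $\{\mu+\nu\le R\}$ with $n>R$, $\Psi_{n}=\Psi$ off $\{\mu<1/n\}\cup\{\nu<1/n\}$, so $\iint_{\{\mu+\nu\le R\}}|K_{n}-K|\,g^{n}g^{n}\le C_{R}\int_{0}^{1/n}\mu^{-\sigma}g^{n}\,\mathcal P(T)\le C_{R}\,n^{-\sigma}\mathcal P(T)^{2}\to0$, using $(\mu+\nu)^{-\sigma}\le\mu^{-\sigma}$ and $\int_{0}^{1/n}\mu^{-\sigma}g^{n}\le n^{-\sigma}\int_{0}^{\infty}\mu^{-2\sigma}g^{n}$. \emph{(c) Removing the singularity at the origin.} For $\delta\in(0,1)$, $\iint_{\{\mu+\nu<\delta\}}|K|\,g^{n}g^{n}\le C\int_{0}^{\delta}\mu^{-\sigma}g^{n}\,\mathcal P(T)\le C\,\delta^{\sigma}\mathcal P(T)^{2}$, uniformly in $n$ and identically for $g$.

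It then remains to prove, for fixed $\delta,R$, that $\iint\widetilde K(\mu,\nu)g^{n}(\mu)g^{n}(\nu)\,d\mu\,d\nu\to\iint\widetilde K(\mu,\nu)g(\mu)g(\nu)\,d\mu\,d\nu$, where $\widetilde K:=K\,\chi_{\{\delta\le\mu+\nu\le R\}}$ is bounded (both variables in a bounded set, and $\mu+\nu\ge\delta$ kills the singularity) with bounded support. Writing the left side as the iterated integral $\int g^{n}(\mu)G^{n}(\mu)\,d\mu$ with $G^{n}(\mu):=\int\widetilde K(\mu,\nu)g^{n}(\nu)\,d\nu$, the slice $\nu\mapsto\widetilde K(\mu,\nu)$ lies in $L^{\infty}$ for each $\mu$, so $g^{n}\rightharpoonup g$ gives $G^{n}(\mu)\to G(\mu):=\int\widetilde K(\mu,\nu)g(\nu)\,d\nu$ pointwise, with $\|G^{n}\|_{L^{\infty}}\le\|\widetilde K\|_{L^{\infty}}\mathcal P(T)$. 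Then $\int g^{n}G^{n}-\int gG=\int g^{n}(G^{n}-G)+\int(g^{n}-g)G$; the second term tends to $0$ since $G\in L^{\infty}$ and $g^{n}\rightharpoonup g$, and for the first the Dunford--Pettis theorem guarantees that the weakly convergent sequence $(g^{n})$ is equi-integrable, so an Egorov-type splitting (a small set on which $\int_{A}g^{n}$ is uniformly small, complement on which $G^{n}-G\to0$ uniformly) yields $\int g^{n}(G^{n}-G)\to0$. Letting $n\to\infty$, then $R\to\infty$ and $\delta\to0$ in (a)--(c) and in this last limit settles all three cases; for $\mathcal B_{b}$ one checks in addition that $\widetilde K(\sigma,\tau)=\tfrac12 E'\Psi\Phi\,\chi_{\{\delta\le\sigma+\tau\le R\}}$ is bounded and measurable, which follows from $E'\le1$, the bound on $\Phi$, and Fubini's theorem.

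I expect the genuinely delicate point to be this last step: the operators are quadratic in the unknown, and a product of two sequences converging only weakly in $L^{1}$ need not converge, so one cannot pass to the limit directly in $g^{n}(\mu)g^{n}(\nu)$. The remedy is that, after the volume truncation in (a), one factor enters only through the bounded, pointwise-convergent average $G^{n}$, so that equi-integrability of $(g^{n})$ (Dunford--Pettis) makes the iterated-integral argument go through. The remaining technical care concerns the unbounded domain of integration in the $\mathcal B_{b}$ term and the half-unbounded one in $\mathcal D_{cb}$, handled by the tail estimate in (a), and the non-integrable singularity $(\mu+\nu)^{-\sigma}$ of $\Psi$ near the origin, absorbed by the bound on $\int\mu^{-2\sigma}g^{n}$ in (b)--(c).
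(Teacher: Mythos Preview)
Your argument is correct and follows a route genuinely different from the paper's. The paper refers to Camejo--Warnecke for $\mathcal{B}_c$ and $\mathcal{D}_{cb}$ and treats $\mathcal{B}_b$ by a direct decomposition: it splits the $\nu$-integral at a large level $b$, writes $g^n g^n - gg = g^n(g^n-g) + g(g^n-g)$, and for each resulting piece shows that integrating out one factor yields an $L^\infty$ coefficient so that the weak convergence $g^n\rightharpoonup g$ handles the other; the tail $\nu>b$ is controlled through the pointwise hypothesis \eqref{distribution4s} on $P$. You instead reduce all three operators uniformly to a single bilinear integral against a bounded, compactly supported kernel (your steps (a)--(c)), and then pass to the limit via an Egorov/equi-integrability argument supplied by Dunford--Pettis. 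The payoff is twofold: the three terms are treated on the same footing, and for $\mathcal{B}_b$ you need only the zeroth-moment identity \eqref{sTotal particles} (through $|\Phi|\le N\|\phi\|_{L^\infty}$) rather than the extra structural assumption \eqref{distribution4s}. The paper's route is more hands-on and closer to the literature it cites; yours is cleaner and isolates exactly where the nonlinearity bites --- the point you flag as ``genuinely delicate'' is in fact the one the paper's presentation leaves somewhat implicit, since the coefficients in their $H_1^n$, $H_2^n$ still depend on $n$ through $g^n(\tau)$, so passing to the limit there tacitly relies on the same equi-integrability that you invoke explicitly.
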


\begin{proof} Let $0<a<n$ and $\mu \in (0, a)$. Suppose $\phi$ belongs to $L^\infty(0, a)$ with compact support included in $(0, a)$.
Using the similar ways with slight modifications as in  Camejo and Warnecke \cite{Camejo:2015II}, it can be easily shown that the first two terms such that $ \mathcal{B}_{c}^n(g^n) \rightharpoonup  \mathcal{B}_{c}(g)$ and $\mathcal{D}_{cb}^n(g^n)  \rightharpoonup  \mathcal{D}_{cb}(g)$.\\

Next, in order to show  $ \mathcal{B}_{b}^n(g^n)  \rightharpoonup   \mathcal{B}_{b}(g)\ \text{in} \  L^1(0, a)\ \text{as}\  n\to \infty$, it is sufficient to prove
\begin{align}\label{Weakconvegences1}
\bigg| \int_{0}^{a}  \phi(\mu) [\mathcal{B}_b^n(g^n)(\mu, t)-\mathcal{B}_b(g)(\mu,t)] d\mu\bigg|  \to 0,\ \text{as}\ n \to \infty,\ \ \text{for}\ \phi \in L^\infty(0, a).
 \end{align}
Let us first simplify the following integral by using triangle inequality, as
\begin{align}\label{Weakconvegences2}
\bigg| \int_{0}^{a}  \phi(\mu) [&\mathcal{B}_b^n(g^n)(\mu, t)-\mathcal{B}_b(g)(\mu,t)] d\mu\bigg|\nonumber\\
 \leq & \frac{1}{2} \bigg|\int_{0}^{a} \int_{\mu}^{b}\int_{0}^{\nu} \phi(\mu) P(\mu|\nu-\tau;\tau) E'(\nu-\tau,\tau) \Psi(\nu-\tau,\tau) \nonumber\\ &~~~~~~~~~~~\times [g^n(\nu-\tau, t)g^n(\tau, t)-g(\nu-\tau, t)g(\tau, t)]d\tau d\nu d\mu \bigg| \nonumber\\
 &+\frac{1}{2}\bigg|\int_{0}^{a} \int_{b}^{\infty}\int_{0}^{\nu} \phi(\mu) P(\mu|\nu-\tau;\tau) E'(\nu-\tau,\tau) \Psi(\nu-\tau,\tau)\nonumber\\
&~~~~~~~~~~\times [g^n(\nu-\tau, t)g^n(\tau, t)-g(\nu-\tau, t)g(\tau, t)]d\tau d\nu d\mu \bigg|,
:=\mathcal{I}_1^n+\mathcal{I}_2^n,
 \end{align}
where $\mathcal{I}_1^n$ and $\mathcal{I}_2^n$ denote the first and the second terms, respectively, on the right-hand side to \eqref{Weakconvegences2} and we choose $b$ large enough for a given $\epsilon >0$ such that $n>b> a$ with
\begin{align}\label{Weakconvegences3}
\frac{k k'(a) a^{(1-\tau_2)}} {2(1-\tau_2)} \| \phi \|_{L^{\infty}(0, a)}[  \mathcal{P}(T)^2+\|g\|^2_{\mathcal{S}}  ]\frac{1} {b^{\sigma}} < \frac{\epsilon}{2}.
\end{align}

Here, let us first consider $\mathcal{I}_1^n$ by using Fubini's theorem, as \begin{align}\label{Weakconvegences4}
\mathcal{I}_1^n= 
 =&  \frac{1}{2} \bigg|  \bigg\{\int_0^a \int_0^{\nu} \int_0^{\nu} + \int_a^b \int_0^a \int_0^{\nu} \bigg\}  \phi(\mu) P(\mu|\nu-\tau; \tau) E'(\nu-\tau, \tau)\nonumber\\
 & ~~~   \times \Psi(\nu-\tau, \tau) g^n(\tau, t)[g^n(\nu-\tau, t)-g(\nu-\tau, t)] d\tau d\mu d\nu     \nonumber\\
  &+  \frac{1}{2}   \bigg\{ \int_0^a \int_0^{\nu} \int_0^{\nu} +\int_a^b \int_0^a \int_0^{\nu}  \bigg\}  \phi(\mu) P(\mu|\nu-\tau; \tau)  E'(\nu-\tau, \tau) \nonumber\\
 & ~~~   \times   \Psi(\nu-\tau, \tau)g(\nu-\tau, t)   [g^n(\tau, t)-g(\tau, t)] d\tau d\mu d\nu  \bigg| \leq  \sum_{i=1}^4  |H_i^n |,
\end{align}
where $|H_i^n|$, for $i=1,2,3,4$ are the modulus of first, second, third and fourth integrals to \eqref{Weakconvegences4}, respectively. Thanks to triangle inequality. Next, we evaluated $|H_i^n|$, for $i=1,2,3,4$ individually. First, we consider $|H_1^n|$ by using Fubini's theorem and the transformation $\nu-\tau=\nu'$ $\&$ $\tau=\tau'$, as
\begin{align*}
|H_1^n| 
 = &  \frac{1}{2} \bigg| \int_0^a \int_0^{a-\nu} \int_0^{\nu}   \phi(\mu) P(\mu|\nu; \tau)  E'(\nu, \tau) \Psi(\nu, \tau)
  g^n(\tau, t) [g^n(\nu, t)-g(\nu, t)] d\mu d\tau d\nu \bigg|.
\end{align*}

We estimate the following term by using \eqref{collisionKernels}, \eqref{sTotal particles} and Lemma \ref{LemmaEquibounded}, for each $\nu \in (0, a]$, as
\begin{align}\label{sH111}
 \frac{1}{2} \bigg| \int_0^{a-\nu} \int_0^{\nu} &  \phi(\mu) P(\mu|\nu; \tau) E'(\nu, \tau)  \Psi(\nu, \tau)g^n(\tau, t)  d\mu d\tau \bigg| \nonumber\\
  \leq &  \frac{k}{2} \|\phi\|_{L^{\infty}(0, a)} N (1+\nu)^{\omega}   \int_0^{a}  \frac{(1+\tau)^{\omega}}{\tau^{\sigma}} g^n(\tau, t)  d\tau  \nonumber\\
 \leq &  \frac{k}{2} \|\phi\|_{L^{\infty}(0, a)} N(1+\nu)^{\omega} (1+a)^{\omega}   \mathcal{P}(T) \in L^{\infty} (0,a).
\end{align}

From (\ref{sH111}) and weak convergence of $g^n \rightharpoonup g$ in $L^1(\mathbb{R}_{+})$, gives
\begin{align}\label{sH1111}
|H_1^n| \to 0,\ \ \ \text{as}\ \ n \to \infty.
\end{align}

Simplifying $|H_2^n|$ by applying Fubini's theorem twice, $\nu-\tau=\nu'$ $\&$ $\tau=\tau'$ and using symmetry of $P$, $E'$ $\&$ $\Psi$, as
\begin{align}\label{sH2}
|H_2^n| 
  = &  \frac{1}{2} \bigg| \int_0^a  \int_{a-\tau}^{b-\tau}  \int_0^{a} \phi(\mu) P(\mu|\nu; \tau)E'(\nu, \tau) \Psi(\nu, \tau)
  g^n(\tau, t) [g^n(\nu, t)-g(\nu, t)] d\mu d\nu d\tau     \nonumber\\
   & + \frac{1}{2} \int_a^b  \int_0^{b-\tau}  \int_0^{a} \phi(\mu) P(\mu|\nu; \tau)E'(\nu, \tau) \Psi(\nu, \tau)
  g^n(\tau, t) [g^n(\nu, t)-g(\nu, t)] d\mu d\nu d\tau \bigg|    \nonumber\\
 \le &  \frac{1}{2} \bigg| \int_0^a  \int_{a-\nu}^{b-\nu}  \int_0^{a} \phi(\mu) P(\mu|\nu; \tau)E'(\nu, \tau) \Psi(\nu, \tau)
  g^n(\tau, t) [g^n(\nu, t)-g(\nu, t)] d\mu d\tau d\nu  \bigg|   \nonumber\\
    + & \frac{1}{2} \bigg| \int_a^b  \int_0^{b-\nu}  \int_0^{a} \phi(\mu) P(\mu|\nu; \tau)E'(\nu, \tau) \Psi(\nu, \tau)
  g^n(\tau, t) [g^n(\nu, t)-g(\nu, t)] d\mu d\tau d\nu \bigg|.
\end{align}
Next, by using \eqref{collisionKernels}, \eqref{distribution4s} and Lemma \ref{LemmaEquibounded}, we estimate the following term for $\nu \in (0, a)$, as
\begin{align}\label{sH21}
  \frac{1}{2} \bigg|\int_{a-\nu}^{b-\nu}  \int_0^{a} & \phi(\mu) P(\mu|\nu; \tau)E'(\nu, \tau) \Psi(\nu, \tau)
  g^n(\tau, t)  d\mu d\tau \bigg| \nonumber\\
   \leq  &  \frac{1}{2} k \|\phi \|_{L^{\infty}(0, a)} k'(a) \frac{ a^{1-\tau_2}}{1-\tau_2} (1+\nu)^{\omega} (1+b)^{\omega} \mathcal{P}(T) \in L^{\infty}(0, a).
\end{align}
Similarly, by using \eqref{collisionKernels}, \eqref{distribution4s} and Lemma \ref{LemmaEquibounded}, we evaluate the following term for $\nu \in (a, b)$, as
\begin{align}\label{sH22}
  \frac{1}{2} \bigg|\int_{0}^{b-\nu}  \int_0^{a} & \phi(\mu) P(\mu|\nu; \tau)E'(\nu, \tau) \Psi(\nu, \tau)
  g^n(\tau, t)  d\mu d\tau \bigg| \nonumber\\
   \leq  &  \frac{1}{2} k \|\phi \|_{L^{\infty}(0, a)} k'(a) \frac{ a^{1-\tau_2}}{1-\tau_2} (1+\nu)^{\omega} (1+b)^{\omega} \mathcal{P}(T) \in L^{\infty}(a, b).
\end{align}

Hence, from (\ref{sH21}), (\ref{sH22}) and the weak convergence of $g^n \rightharpoonup g$ in $L^1(\mathbb{R}_{+})$, we have
\begin{align}\label{sH222}
|H_2^n| \to 0,\ \ \ \text{as}\ \ n \to \infty.
\end{align}

Let us now simplify $|H_3^n|$, by using Fubini's theorem twice, and $\nu-\tau=\nu'$ $\&$ $\tau=\tau'$, as
\begin{align}\label{H3}
|H_3^n| 
   = &  \frac{1}{2} \bigg| \int_0^a \int_0^{a-\tau} \int_0^{\nu}   \phi(\mu) P(\mu|\nu; \tau)E'(\nu, \tau) \Psi(\nu, \tau)
  g(\nu, t) [g^n(\tau, t)-g(\tau, t)] d\mu d\nu d\tau \bigg|.
\end{align}

Next, we estimate the following integral \eqref{collisionKernels}, \eqref{sTotal particles} and Lemma \ref{LemmaEquibounded}, we have
\begin{align}\label{H33}
 \frac{1}{2} \bigg| \int_0^{a-\tau} \int_0^{\nu} &  \phi(\mu) P(\mu|\nu; \tau)E'(\nu, \tau) \Psi(\nu, \tau) g(\nu, t)  d\mu d\nu  \bigg| \nonumber\\
 \leq & k \frac{1}{2} \|\phi\|_{L^{\infty}(0, a)} N (1+\tau)^{\omega}  \int_0^{a} \frac{ (1+\nu)^{\omega} }{ \nu^{\sigma}} g(\nu, t) d\nu\nonumber\\
 \leq & \frac{1}{2} k \|\phi \|_{L^{\infty}(0, a)} N  (1+\tau)^{\omega} (1+a)^{\omega} \|g\|_{\mathcal{S}}   \in L^{\infty}(0, a).
\end{align}

From (\ref{H33}) and weak convergence of $g^n$ in $L^1(\mathbb{R}_+)$, confirms that
\begin{align}\label{H333}
|H_3^n| \to 0,\ \ \ \text{as}\ \ n \to \infty.
\end{align}

Simplifying $|H_4^n|$ by using Fubini's theorem twice, and  $\nu-\tau=\nu'$ $\&$ $\tau=\tau'$, we have
\begin{align}\label{H4}
|H_4^n| = 
  \leq &  \frac{1}{2} \bigg| \int_0^a  \int_{a-\tau}^{b-\tau}  \int_0^{a} \phi(\mu) P(\mu|\nu; \tau)E'(\nu, \tau) \Psi(\nu, \tau)
  g^n(\nu, t) [g^n(\tau, t)-g(\tau, t)] d\mu d\nu d\tau  \bigg|   \nonumber\\
   + & \frac{1}{2} \bigg| \int_a^b  \int_0^{b-\tau}  \int_0^{a} \phi(\mu) P(\mu|\nu; \tau)E'(\nu, \tau) \Psi(\nu, \tau)
  g^n(\nu, t) [g^n(\tau, t)-g(\tau, t)] d\mu d\nu d\tau \bigg|.
\end{align}

Similar to $|H_2^n|$, we can easily show that
\begin{align}\label{H444}
|H_4^n| \to 0,\ \ \ \text{as}\ \ n \to \infty.
\end{align}
Using (\ref{sH1111}), (\ref{sH222}), (\ref{H333}) and (\ref{H444}) into \eqref{Weakconvegences4} and thus, we get
\begin{align}\label{Weakconvegences21}
\frac{1}{2} \bigg| \int_0^a\int_\mu^b \int_0^{\nu} &  \phi(\mu)P(\mu|\nu-\tau; \tau)  E'(\nu-\tau, \tau) \Psi(\nu-\tau, \tau)\nonumber\\
 & \times [g^n(\nu-\tau, t)g^n(\tau, t)-g(\nu-\tau, t)g(\tau, t)]d\tau d\nu d\mu \bigg|  \to 0\ \ \text{as}\ n\to \infty.
\end{align}

Next, $\mathcal{I}_2^n$ can be simplified by using Fubini's theorem twice and the transformation $\nu-\tau=\nu'$ $\&$ $\tau=\tau'$, we obtain
\begin{align}\label{Weakconvegences22}
\mathcal{I}_2^n
 = &\frac{1}{2} \bigg| \int_{0}^{a}  \int_{0}^{b}\int_{b}^{\infty} \phi(\mu) P(\mu|\nu-\tau; \tau) E'(\nu-\tau, \tau) \Psi(\nu-\tau, \tau) \nonumber\\
 & ~~~~~~~~~~~~~~~\times [g^n(\nu-\tau, t)g^n(\tau, t)-g(\nu-\tau, t)g(\tau, t)]d\nu d\tau d\mu  \nonumber\\
 &+ \int_{0}^{a}  \int_{b}^{\infty}\int_{\tau}^{\infty} \phi(\mu) P(\mu|\nu-\tau; \tau) E'(\nu-\tau, \tau) \Psi(\nu-\tau, \tau)\nonumber\\
 & ~~~~~~~~~~~~~~\times [g^n(\nu-\tau, t)g^n(\tau, t)-g(\nu-\tau, t)g(\tau, t)]d\nu d\tau d\mu \bigg| \nonumber\\
  \le & \frac{1}{2} \bigg|  \int_{0}^{b}\int_{b-\tau}^{n} \int_{0}^{a} \phi(\mu) P(\mu| \nu; \tau) E'(\nu, \tau) \Psi(\nu, \tau) [g^n(\nu, t)g^n(\tau, t)-g(\nu, t)g(\tau, t)]d\mu  d\nu d\tau \bigg| \nonumber\\
  &+   \frac{1}{2} \bigg|   \int_{b}^{\infty}\int_{0}^{n} \int_{0}^{a}  \phi(\mu) P(\mu| \nu; \tau) E'(\nu, \tau) \Psi(\nu, \tau) [g^n(\nu, t)g^n(\tau, t)-g(\nu, t)g(\tau, t)]d\mu  d\nu d\tau \bigg| \nonumber\\
   &+ \frac{1}{2} \bigg|  \int_{0}^{\infty}\int_{n}^{\infty} \int_{0}^{a} \phi(\mu) P(\mu| \nu; \tau) E'(\nu, \tau) \Psi(\nu, \tau) [g^n(\nu, t)g^n(\tau, t)-g(\nu, t)g(\tau, t)]d\mu  d\nu d\tau \bigg|\nonumber\\
   =:& \sum_{i=1}^3 N_i,
\end{align}
where $N_i's$, for $i=1, 2, 3$ are the first, the second and the third terms, respectively, on the right-hand side to \eqref{Weakconvegences22}. Now, $N_1$ can be simplified by interchanging $\nu$ to $\tau$ and using the symmetry of $P$, $E'$ $\&$ $\Psi$, to have \begin{align}\label{Weakconvegences23}
N_1 
   \le & \frac{1}{2} \bigg| \bigg\{  \int_{0}^{b}\int_{b-\nu}^{b} \int_{0}^{a} +\int_{b}^{n}\int_{0}^{b} \int_{0}^{a} \bigg\} \phi(\mu)P(\mu| \nu; \tau) E'(\nu, \tau) \Psi(\nu, \tau)  g(\tau, t)[g^n(\nu, t)-g(\nu, t)]  d\mu d\tau d\nu  \bigg| \nonumber\\
 & + \frac{1}{2} \bigg| \bigg\{  \int_{0}^{b}\int_{b-\tau}^{b} \int_{0}^{a} +\int_{0}^{b}\int_{b}^{n} \int_{0}^{a} \bigg\} \phi(\mu)P(\mu| \nu; \tau) E'(\nu, \tau) \Psi(\nu, \tau) \nonumber\\
 & ~~\times g^n(\nu, t)[g^n(\tau, t)-g(\tau, t)] d\mu d\nu d\tau  \bigg|
\end{align}
By using \eqref{distribution4s}, \eqref{collisionKernels}, Lemma \ref{LemmaEquibounded} and $g^n \rightharpoonup g$ in $L^1(\mathbb{R}_{+})$, we can easily seen that \begin{align}\label{Weakconvegences24}
  N_1 \to 0, \ \ \text{as}\ \ n\to \infty.
\end{align}
Next, we estimate $N_2$ by using  \eqref{distribution4s}, \eqref{collisionKernels}, Lemma \ref{LemmaEquibounded} and \eqref{Weakconvegences3}, as \begin{align}\label{Weakconvegences25}
N_2    
 \le & \frac{1}{2} \| \phi \|_{L^{\infty}(0, a)}   k k'(a)  \int_{b}^{\infty}\int_{0}^{n} \int_{0}^{a}  \mu^{-\tau_2}   \frac{(1+\nu)^{\omega} (1+\tau)^{\omega}}{(\nu+\tau)^{\sigma}}   [g^n(\nu, t)g^n(\tau, t)+g(\nu, t)g(\tau, t)]d\mu  d\nu d\tau  \nonumber\\
 \le &   \frac{1}{2} \frac{k k'(a) a^{1-\tau_2}} {1-\tau_2} \| \phi \|_{L^{\infty}(0, a)}   \bigg[ \mathcal{P}(T)^2+\|g\|^2_{\mathcal{S}} \bigg]\frac{1}{b^{\sigma}} < \epsilon.
\end{align}

Now consider $N_3$, by using \eqref{collisionKernels}, \eqref{distribution4s}, $g^n=0$ as $n>\mu$ and Fubini's theorem, as
\begin{align*}
N_3 
 \le & \frac{1}{2} k k'(a) \| \phi \|_{L^{\infty}(0, a)} \int_{0}^{\infty} \int_{1}^{\infty}\int_{0}^{a}   \mu^{-\tau_2} \frac{(1+\nu)^{\omega} (1+\tau)^{\omega}}{(\nu+\tau)^{\sigma}}  g(\nu, t)g(\tau, t) d\mu  d\nu d\tau \nonumber\\
\le & \frac{1}{2} \frac{k k'(a) a^{1-\tau_2}} {1-\tau_2} \| \phi \|_{L^{\infty}(0, a)}  \|g\|^2_{\mathcal{S}} < \infty.
\end{align*}
Hence, $N_3$ is uniformly bounded for each $n$. Then, by Lebesgue dominated convergence theorem
\begin{align}\label{Weakconvegences26}
N_3 
  \to 0,  \ \ \text{as}\ \ n\to \infty.
\end{align}

Now, using (\ref{Weakconvegences24}), (\ref{Weakconvegences25}) and (\ref{Weakconvegences26}) into (\ref{Weakconvegences22}), we obtain
\begin{align}\label{Weakconvegences27}
\frac{1}{2} \bigg| \int_{0}^{a} & \int_{b}^{\infty}\int_{0}^{\nu} \phi(\mu)P(\mu|\nu-\tau; \tau) E'(\nu-\tau, \tau) \Psi(\nu-\tau, \tau)\nonumber\\
 & ~~~~~~~~~~~~~~~\times [g^n(\nu-\tau, t)g^n(\tau, t)-g(\nu-\tau, t)g(\tau, t)]d\tau d\nu d\mu \bigg| \to 0\ \text{as}\ n \to \infty.
 \end{align}
Applying \eqref{Weakconvegences2} and \eqref{Weakconvegences26} into \eqref{Weakconvegences27}, we conclude that
\begin{align*}
 \lim_{n \rightarrow \infty }{ \mathcal{B}^n_b{(g^n)}\rightharpoonup \mathcal{B}_b{(g)} },
\end{align*}

 which completes the proof of Lemma \ref{Weakconvergencelemmasccbe}.
\end{proof}

Now, this is the right place to prove the Theorem \ref{existmain theorem1} with the help of above results.

\begin{proof}
 \textit{of Theorem \ref{existmain theorem1}}: Fix $a \in (0, n_k)$, $T>0$ 
and $ \phi \in \L^{\infty}(0, a)$ then from Lemma \ref{Weakconvergencelemmasccbe}, we have for each $ s \in [0,t]$
\begin{align}\label{main convergence1}
\int_{0}^{a}\phi(\mu)[ (\mathcal{B}_c^{n_k}-\mathcal{D}_{cb}^{n_k}+\mathcal{B}_b^{n_k}) (g^{n_k})(\mu,s)-(\mathcal{B}_c-\mathcal{D}_{cb}+\mathcal{B}_b  )(g)(\mu,s)]d\mu \to 0 \ \ \text{as} \ \ n_k\to \infty.
\end{align}
  Again, using by using repeated application of Fubini's theorem and using $\mu-\nu=\mu'$, $\&$ $\nu=\nu'$ \eqref{collisionKernels}, \eqref{distribution4s}, (\ref{sTotal particles}) and Lemma \ref{LemmaEquibounded}, there exists a positive constant $C(a, T)$ such that

\begin{align}\label{exist domin}
\bigg|\int_{0}^{a} &\phi(\mu)[ (\mathcal{B}_c^{n_k}-\mathcal{D}_{cb}^{n_k}+\mathcal{B}_b^{n_k}) (g^{n_k})(\mu,s) d \mu \bigg| \le C(a, T) \|\phi\|_{L^{\infty}(0, a)},
\end{align}
where
\begin{align*}
C(a, T) :=  \bigg[1/2k (3 \mathcal{P}(T)^2+5\|g\|^2_{\mathcal{S}}) + k N[ \mathcal{P}(T)^2 +\|g\|^2_{\mathcal{S}} ] + k k'(a) \frac{a^{1-\tau_2}}{1-\tau_2} \|g\|^2_{\mathcal{S}} \bigg].
\end{align*}
 One can see that the left-hand side of (\ref{exist domin}) is in $L^{1}(0, t)$, then from (\ref{main convergence1}), (\ref{exist domin}) and the Lebesgue dominated convergence theorem, we obtain
\begin{align}\label{converge1}
\int_0^t\int_{0}^{a} \phi(\mu)[ (\mathcal{B}_c^{n_k}-\mathcal{D}_{cb}^{n_k}+\mathcal{B}_b^{n_k}) (g^{n_k})(\mu,s)-(\mathcal{B}_c-\mathcal{D}_{cb}+\mathcal{B}_b  )(g)(\mu,s)] d\mu ds \to 0,
\end{align}
as $n_k \to \infty$. Since $\phi$ is arbitrary and (\ref{converge1}) holds for $ \phi \in L^{\infty}(0, a)$ as $n_k \to \infty$, hence, by applying Fubini's theorem, we get
\begin{align}\label{exist last1}
\int_{0}^{t}(\mathcal{B}_c^{n_k}-\mathcal{D}_{cb}^{n_k}+\mathcal{B}_b^{n_k}) (g^{n_k})(\mu,s) ds\rightharpoonup  \int_{0}^{t} (\mathcal{B}_c-\mathcal{D}_{cb}+\mathcal{B}_b  )(g)(\mu,s)ds\  \text{ in}\ L^{1}(0, a).
\end{align}
Then by the definition of  $(\mathcal{B}_c^{n_k}-\mathcal{D}_{cb}^{n_k}+\mathcal{B}_b^{n_k})$  we obtain
\begin{align}\label{exist last2}
g^{n_k}(\mu,t)= \int_{0}^{t} (\mathcal{B}_c^{n_k}-\mathcal{D}_{cb}^{n_k}+\mathcal{B}_b^{n_k}) (g^{n_k})(\mu, s)ds+ g^{n}_{in}(\mu),\ \ \ \text{for}\ t \in [0,T].
\end{align}
Next, using (\ref{exist last1}), \eqref{equicontinuity6} and (\ref{exist last2}), we thus obtain
\begin{align*}
\int_{0}^{a}\phi(\mu)g(\mu,t)d\mu = \int_{0}^{t} \int_{0}^{a} \phi(\mu)(\mathcal{B}_c-\mathcal{D}_{cb}+\mathcal{B}_b  )(g)(\mu, s)d\mu s+\int_{0}^{a}\phi(\mu)g_{in}(\mu)d\mu,
\end{align*}
for any $\phi \in L^{\infty}(0, a)$.  Hence, for the arbitrariness of $T$, $a$, the uniqueness of limit and for all $\phi \in L^{\infty}(0, a)$, we have $g(\mu, t)$ is a solution to (\ref{sccecfe})--(\ref{sin1}). This implies that for almost any $\mu \in (0, a)$, we have
\begin{align*}
g(\mu,t)= \int_{0}^{t}(\mathcal{B}_c-\mathcal{D}_{cb}+\mathcal{B}_b  )(g) (\mu, s)ds+ g_{in}(\mu).
\end{align*}
This completes the proof of the existence Theorem \ref{existmain theorem1}. \end{proof}

 In the next section, the uniqueness of weak solutions to (\ref{sccecfe})--(\ref{sin1}) is investigated under additional growth condition \eqref{collisionKerneluniques} on collision kernel $\Psi$. The proof of the uniqueness result is motivated from \cite{Stewart:1990}.

\section{Uniqueness of weak solutions}
\begin{proof}
\textit {of Theorem \ref{uniquemain theorem1}:}
Let $g$ and $h$ be two weak solutions to \eqref{sccecfe}--\eqref{sin1} on $[0, \infty)$, with $ g_{in}(\mu)=h_{in}(\mu)$. Set $ Z:=g-h$. For $n= 1,2,3,\cdots,$ we define
\begin{eqnarray}\label{Uniqueness1}
\Xi^{n}(t):= \int_{0}^{n}(1+\mu^{-\sigma}) |Z(\mu,t)|d\mu=\int_{0}^{n}(1+\mu^{-\sigma}) \mbox{sgn}(Z(\mu,t)) [g(\mu,t)-h(\mu,t)]d\mu.
\end{eqnarray}
Thanks to the property of the signum function. Next, substituting the value of $g(\mu,t)-h(\mu,t)$ by using Definition \ref{sdef1} $(iii)$ into \eqref{Uniqueness1} and simplifying it further by applying the following identity
\begin{eqnarray*}\label{Uniqueness2}
g(\mu,s)g(\nu,s)-h(\mu,s)h(\nu,s)= g(\mu,s)Z(\nu,s)+h(\nu,s)Z(\mu,s),
\end{eqnarray*}
 Fubini's theorem, symmetry of $\Psi$, the transformation $\mu-\nu=\mu'$ $\&$ $\nu=\nu'$ and $\nu-\tau=\nu'$ $\&$ $\tau=\tau'$, we have
\begin{align}\label{Uniqueness3}
\Xi^{n}(t)=&  \frac{1}{2} \int_{0}^{t}\int_{0}^{n}\int_{0}^{n-\mu} \bigg[[1+(\mu+\nu)^{-\sigma}] \mbox{sgn}(Z(\mu+\nu, s))E(\mu, \nu) -(1+\mu^{-\sigma})\mbox{sgn}(Z(\mu, s)) \nonumber\\
& -(1+\nu^{-\sigma})\mbox{sgn}(Z(\nu, s)) \bigg] \Psi(\mu, \nu)\{g(\mu, s)Z(\nu, s)+h(\nu, s)Z(\mu, s)\}d\nu d\mu ds\nonumber\\
& -\int_{0}^{t}\int_{0}^{n}\int_{n-\mu}^{\infty}(1+\mu^{-\sigma}) \mbox{sgn}(Z(\mu,s))  \Psi(\mu, \nu) \{g(\mu,s)Z(\nu, s)+h(\nu, s)Z(\mu, s)\} d\nu d\mu ds\nonumber\\
&+ \frac{1}{2} \int_{0}^{t}\int_{0}^{n}\int_{0}^{n-\nu}\int_{0}^{\nu+\tau}(1+\mu^{-\sigma}) \mbox{sgn}(Z(\mu, s)) P(\mu|\nu;\tau) E'(\nu, \tau) \Psi(\nu, \tau)\nonumber\\
&\hspace{2cm}\times \{g(\nu, s)Z(\tau, s)+h(\tau, s)Z(\nu, s)\} d\mu d\tau d\nu ds\nonumber\\
&+ \frac{1}{2} \int_{0}^{t}\int_{0}^{n}\int_{n-\nu}^{\infty}\int_{0}^{n}(1+\mu^{-\sigma}) \mbox{sgn}(Z(\mu, s))P(\mu|\nu;\tau) E'(\nu, \tau)] \Psi(\nu, \tau)\nonumber\\
&\hspace{2cm}\times \{ g(\nu, s)Z(\tau, s)+h(\tau, s)Z(\nu, s)\} d\mu d\tau d\nu ds\nonumber\\
&+ \frac{1}{2} \int_{0}^{t}\int_{n}^{\infty}\int_{0}^{\infty}\int_{0}^{n}(1+\mu^{-\sigma}) \mbox{sgn}(Z(\mu,s))P(\mu|\nu;\tau) E'(\nu, \tau)] \Psi(\nu, \tau)\nonumber\\
&\hspace{2cm}\times \{g(\nu, s)Z(\tau, s)+h(\tau, s)Z(\nu, s)\}d\mu d\tau d\nu ds.
\end{align}
Next, let us define the term $Q$ by
\begin{align*}
Q(\mu, \nu, s):=& [1+(\mu+\nu)^{-\sigma}] \mbox{sgn}(Z(\mu+\nu, s))E(\mu, \nu) -(1+\mu^{-\sigma})\mbox{sgn}(Z(\mu,s))\\
&-(1+\nu^{-\sigma})\mbox{sgn}(Z(\nu,s)).
\end{align*}

Using the definition of $Q$ and the properties of signum function into \eqref{Uniqueness3}, we obtain
\begin{align}\label{Uniqueness4}
\Xi^{n}(t)\le &  \frac{1}{2} \int_{0}^{t}\int_{0}^{n}\int_{0}^{n-\mu}  Q(\mu, \nu, s) \Psi(\mu, \nu) g(\mu, s)Z(\nu, s) d\nu d\mu ds\nonumber\\
&+\frac{1}{2} \int_{0}^{t}\int_{0}^{n}\int_{0}^{n-\mu}  Q(\mu, \nu, s) \Psi(\mu, \nu) h(\nu, s)Z(\mu, s) d\nu d\mu ds\nonumber\\
& +\int_{0}^{t}\int_{0}^{n}\int_{n-\mu}^{\infty}(1+\mu^{-\sigma})  \Psi(\mu, \nu) g(\mu, s) |Z(\nu,s)| d\nu d\mu ds\nonumber\\
& -\int_{0}^{t}\int_{0}^{n}\int_{n-\mu}^{\infty}(1+\mu^{-\sigma})   \Psi(\mu, \nu) h(\nu,s) |Z(\mu,s)| d\nu d\mu ds\nonumber\\
&+ \frac{1}{2} \int_{0}^{t}\int_{0}^{n}\int_{0}^{\infty}\int_{0}^{\nu+\tau}(1+\mu^{-\sigma})  P(\mu|\nu;\tau)  \Psi(\nu, \tau)\nonumber\\
&\hspace{2cm}\times \{g(\nu, s) |Z(\tau, s)|+h(\tau, s) |Z(\nu, s)|\}d\mu d\tau d\nu ds\nonumber\\
&+ \frac{1}{2} \int_{0}^{t}\int_{n}^{\infty}\int_{0}^{\infty}\int_{0}^{\nu+\tau}(1+\mu^{-\sigma}) P(\mu|\nu;\tau)  \Psi(\nu, \tau)\nonumber\\
&\hspace{2cm}\times \{g(\nu, s) |Z(\tau, s)|+h(\tau, s)|Z(\nu, s)|\} d\mu d\tau d\nu ds.
\end{align}
Due to the non-negativity of the fourth integral on the right-hand side to \eqref{Uniqueness4}, \eqref{sTotal particles} and $r=\sigma$ into \eqref{distribution1s}, we estimate  \eqref{Uniqueness4}, as \begin{align}\label{Uniqueness5}
\Xi^{n}(t)\le &  \frac{1}{2} \int_{0}^{t}\int_{0}^{n}\int_{0}^{n-\mu}  Q(\mu, \nu, s) \Psi(\mu, \nu) g(\mu, s)Z(\nu, s) d\nu d\mu ds\nonumber\\
&+\frac{1}{2} \int_{0}^{t}\int_{0}^{n}\int_{0}^{n-\mu}  Q(\mu, \nu, s) \Psi(\mu, \nu) h(\nu, s)Z(\mu, s) d\nu d\mu ds\nonumber\\
& +\int_{0}^{t}\int_{0}^{n}\int_{n-\mu}^{\infty}(1+\mu^{-\sigma})  \Psi(\mu, \nu) g(\mu,s) |Z(\nu,s)| d\nu d\mu ds\nonumber\\
&+ \frac{1}{2} \int_{0}^{t}\int_{0}^{n}\int_{0}^{n} [N+\eta(\sigma)(\nu+\tau)^{-\sigma}]  \Psi(\nu, \tau)g(\nu, s) |Z(\tau, s)|  d\tau d\nu ds\nonumber\\
&+ \frac{1}{2} \int_{0}^{t}\int_{0}^{n}\int_{n}^{\infty} [N+\eta(\sigma)(\nu+\tau)^{-\sigma}]  \Psi(\nu, \tau)g(\nu, s) |Z(\tau, s)|  d\tau d\nu ds\nonumber\\
&+ \frac{1}{2} \int_{0}^{t}\int_{0}^{n}\int_{0}^{\infty} [N+\eta(\sigma)(\nu+\tau)^{-\sigma}]  \Psi(\nu, \tau) h(\tau, s) |Z(\nu, s)|  d\tau d\nu ds\nonumber\\
&+ \frac{1}{2} \int_{0}^{t}\int_{n}^{\infty}\int_{0}^{\infty} [N+\eta(\sigma)(\nu+\tau)^{-\sigma}]   \Psi(\nu, \tau)\nonumber\\
&\hspace{2cm}\times [g(\nu, s) |Z(\tau, s)|+h(\tau, s)|Z(\nu, s)|]  d\tau d\nu ds =: \sum_{i=1}^7 S_i^n(t),
\end{align}
where $S_i^n(t)$, for $i = 1, 2, 3, 4, 5, 6, 7$ are the corresponding integrands in the preceding line. We now solve each $S_i^n(t)$ individually.
Using properties of the Signum function, consider following two bounds, as
\begin{align}\label{bound1}
Q(\mu, \nu, s)  Z(\nu, s) = & \bigg[ [1+(\mu+\nu)^{-\sigma}] \mbox{sgn}(Z(\mu+\nu, s))E(\mu, \nu) -(1+\mu^{-\sigma})\mbox{sgn}(Z(\mu,s))\nonumber\\
&-(1+\nu^{-\sigma})\mbox{sgn}(Z(\nu,s)) \bigg] Z(\nu, s)\nonumber \\
 \le & \bigg[ 1+\mu^{-\sigma}+ \nu^{-\sigma}  +(1+\mu^{-\sigma}) -(1+\nu^{-\sigma}) \bigg] |Z(\nu, s)| \nonumber\\
  \le & 2 ( 1+\mu^{-\sigma} ) |Z(\nu, s)|.
\end{align}
Similarly, we have
\begin{align}\label{bound2}
Q(\mu, \nu, s)  Z(\mu, s) 
  \le  2 ( 1+\nu^{-\sigma} ) |Z(\mu, s)|.
\end{align}

Let us first estimate $S_1^n(t)$ by using \eqref{bound1} and \eqref{collisionKerneluniques}, as
\begin{align}\label{UniquenessS1}
S_1^n(t) \le &  k \int_{0}^{t}\int_{0}^{n}\int_{0}^{n}  ( 1+\mu^{-\sigma} )  (\mu+\nu)^{-\sigma} |Z(\nu, s)|   g(\mu, s)  d\nu d\mu ds\nonumber\\
\le &  k \int_{0}^{t}\int_{0}^{n}\int_{0}^{n}  ( 1+\mu^{-\sigma} )  (1+\nu^{-\sigma})  |Z(\nu, s)|   g(\mu, s)  d\nu d\mu ds\nonumber\\
 = &  k \int_{0}^{t} \Xi^{n}(s)  \int_{0}^{n}  ( 1+\mu^{-\sigma} )   g(\mu, s)  d\mu ds \le 2 k \sup_{s\in[0, t] }\| g(s)\|_{\mathcal{S}} \int_{0}^{t} \Xi^{n}(s)ds.
\end{align}
Similarly, by using \eqref{collisionKerneluniques} and \eqref{bound2}, $S_2^n(t)$ can be evaluated, as
\begin{align}\label{UniquenessS2}
S_2^n(t) 
\le 2 k \sup_{s\in[0, t] }\| h(s)\|_{\mathcal{S}} \int_{0}^{t} \Xi^{n}(s)ds.
\end{align}

Next, consider the following integral by using \eqref{collisionKerneluniques}, as
\begin{align}\label{UniquenessS30}
|S_3^n(t)| \le &  k \int_{0}^{t}\int_{0}^{\infty}\int_{0}^{\infty}  (1+\mu^{-\sigma})  (\mu+ \nu)^{-\sigma} g(\mu,s) |Z(\nu,s)| d\nu d\mu ds\nonumber\\
\le &  k \int_{0}^{t} \int_{0}^{\infty} \int_{0}^{\infty}  (1+\mu^{-\sigma})  \nu^{-\sigma} g(\mu,s) [g(\nu, s)+h(\nu, s)] d\nu d\mu ds\nonumber\\
\le & 2 kT \sup_{s\in[0, t] }\bigg[\|g(s)\|_{\mathcal{S}}+\|h(s)\|_{\mathcal{S}} \bigg]   \|g(s)\|_{\mathcal{S}}  < \infty.
\end{align}

Then, by Lemma 2.1 in \cite{Giri:2011} to \eqref{UniquenessS30}, we obtain
\begin{align}\label{UniquenessS3}
S_3^n(t) \to 0\ \ \text{as}\ n \to \infty.
\end{align}

 $S_4^n(t)$ can be evaluated by applying \eqref{collisionKerneluniques}, as
\begin{align}\label{UniquenessS4}
S_4^n(t) \leq & \frac{1}{2} k \eta(\sigma) \int_{0}^{t}\int_{0}^{n}\int_{0}^{n} [1+(\nu+\tau)^{-\sigma}]  (\nu+\tau)^{-\sigma} g(\nu, s) |Z(\tau, s)|  d\tau d\nu ds\nonumber\\
\leq & \frac{1}{2} k \eta(\sigma) \int_{0}^{t}\int_{0}^{n}\int_{0}^{n} (1+\nu^{-\sigma})  (1+\tau^{-\sigma}) g(\nu, s) |Z(\tau, s)|  d\tau d\nu ds\nonumber\\
\leq &  k \eta(\sigma) \sup_{s\in[0, t] } \|g(s)\|_{\mathcal{S}}  \int_{0}^{t} \Xi^n(s)  ds.
\end{align}

By using \eqref{collisionKerneluniques}, we estimate $S_5^n(t)$ as
\begin{align}\label{UniquenessS50}
|S_5^n(t)| \leq & \frac{1}{2} k \eta(\sigma) \int_{0}^{t}\int_{0}^{n}\int_{n}^{\infty} [1+(\nu+\tau)^{-\sigma}]  (\nu+\tau)^{-\sigma} g(\nu, s) |Z(\tau, s)|  d\tau d\nu ds\nonumber\\
\leq & \frac{1}{2} k \eta(\sigma) \int_{0}^{t}\int_{0}^{n}\int_{n}^{\infty} (1+\tau^{-\sigma})  \nu^{-\sigma} g(\nu, s) [g(\tau, s)+h(\tau, s)]  d\tau d\nu ds\nonumber\\
\leq &   k \eta(\sigma)  \sup_{s\in[0, t] }\bigg[\|g(s)\|_{\mathcal{S}} +\|h(s)\|_{\mathcal{S}}  \bigg] \sup_{s\in[0, t] } \|g(s)\|_{\mathcal{S}}  T< \infty.
\end{align}

Then, by an application of Lebesgue dominated convergence theorem to \eqref{UniquenessS50}, we obtain
\begin{align}\label{UniquenessS5}
S_5^n(t) \to 0\ \ \text{as}\ n \to \infty.
\end{align}
 Now, we evaluate $S_6^n(t)$ similar to $S_6^4(t)$ by using \eqref{collisionKerneluniques} as
\begin{align}\label{UniquenessS6}
S_6^n(t) 
\le  \frac{1}{2} k \eta(\sigma) \sup_{s\in[0, t] } \|h(s)\|_ {\mathcal{S}} \int_{0}^{t} \Xi^n(s) ds.
\end{align}
Finally, we estimate the last term $S_7^n(t)$ similar to $S_5^n(t)$ by using \eqref{collisionKerneluniques} as
\begin{align}\label{UniquenessS70}
|S_7^n(t)| 
\le   k \eta(\sigma) T \sup_{s\in[0, t] } \{ \|g(s)\|_{\mathcal{S}}+\|h(s)\|_{\mathcal{S}}\}^2 < \infty.
\end{align}

Then, by using Lebesgue dominated convergence theorem into \eqref{UniquenessS70}, we find
\begin{align}\label{UniquenessS7}
S_7^n(t) \to 0\ \ \text{as}\ n \to \infty.
\end{align}

Now, taking $n \to \infty $ to \eqref{Uniqueness5} and then inserting \eqref{UniquenessS1}, \eqref{UniquenessS2}, \eqref{UniquenessS3}, \eqref{UniquenessS4}, \eqref{UniquenessS5}, \eqref{UniquenessS6} and \eqref{UniquenessS7}, we have
\begin{align}\label{Uniqueness6}
\lim_{n \to \infty} \Xi^{n}(t)\le &  k \sup_{s\in[0, t] } \bigg[ 2  \| g(s)\|_{\mathcal{S}}  + 2  \| h(s)\|_{\mathcal{S}}  \nonumber\\
& + \eta(\sigma)  \|g(s)\|_{\mathcal{S}}
+\frac{1}{2}  \eta(\sigma) \|h(s)\|_{\mathcal{S}} \bigg] \int_{0}^{t} \lim_{n \to \infty} \Xi^n(s) ds.
\end{align}
Then applying Gronwall's inequality to \eqref{Uniqueness6}, we obtain
\begin{align*}
\lim_{n \to \infty} \Xi^{n}(t)=0\ \ \forall t.
\end{align*}
This implies $g(\mu, t)=h(\mu, t)$ almost everywhere. This completes the proof of the Theorem \ref{uniquemain theorem1}.

\end{proof}

\section*{Acknowledgments}
This work was mainly supported by Science and Engineering Research Board (SERB), Department of Science and Technology (DST), India through the project
 $YSS/2015/001306$. A part of this work is also supported by Faculty Initiation Grant (FIG: MTD/FIG/100680), Indian Institute of Technology Roorkee, India.
In addition, authors would like to thank University Grant Commission (UGC) India for providing the PhD fellowship to PKB through the Grant
 $6405/11/44$.

\bibliographystyle{plain}

\begin{thebibliography}{10}
\small{

 \bibitem {Aldous:1999} D. J. Aldous, Deterministic and stochastic model for coalescence (aggregation and coagulation): a review of the mean-field theory
  for probabilists, \emph{Bernouli}, \textbf{5}: 3--48, 1999.


\bibitem{Ash:1972} R. B. Ash, \emph{Measure, Integration and Functional Analysis}, Academic Press, New York, 1972.


  \bibitem{Barik:2017Anote} P. K. Barik and A. K. Giri, A note on mass-conserving solutions to the coagulation and fragmentation equation by using
non-conservative approximation, \emph{Kinet. Relat. Models}, \textbf{11}: 1125--1138, 2018.



\bibitem{Barik:2018Mass} P. K. Barik, A. K. Giri and Ph. Lauren\c{c}ot, Mass-conserving solutions to Smoluchowski coagulation equation with singular kernel, \emph{arXiv preprint arXiv:1804.00853}, 2018.

\bibitem{Barik:2018Weak} P. K. Barik and A. K. Giri, Weak solutions to the continuous coagulation model with collisional breakage, \emph{arXiv preprint arXiv:1805.10137}, 2018.


\bibitem{Brown:1995}  P. S. Brown, Structural stability of the coalescence/breakage equations, \emph{J. Atmosph. Sci.}, \textbf{52}: 3857--3865, 1995.




\bibitem{Camejo:2013} C. C. Camejo, \emph{The Singular Coagulation and Coagulation-Fragmentation Equation}, PhD thesis, 2013.



\bibitem{Camejo:2015I} C. C. Camejo, R.  Gr\"{o}pler and G.  Warnecke, Regular solutions to the coagulation equations with singular kernels,
\emph{Math. Methods Appl. Sci.}, \textbf{38}: 2171--2184, 2015.

\bibitem{Camejo:2015II}  C. C. Camejo and G. Warnecke, The singular kernel coagulation equation with multifragmentation,
\emph{Math. Methods Appl. Sci.}, \textbf{38}: 2953--2973, 2015.



\bibitem{Canizo:2011} J. A. Ca\~{n}izo and S. Mischler, Regularity, local behavior and partial uniqueness for self-similar profiles of
Smoluchowski's coagulation equation, \emph{Rev. Mat. Iberoamericana}, \textbf{27}: 803--839, 2011.


 \bibitem{Cheng:1990} Z. Cheng and S. Redner, Kinetics of fragmentation, \emph{J. Phys. A. Math. Gen.}, \textbf{23}: 1233--1258, 1990.


 \bibitem{Cheng:1988} Z. Cheng and S. Redner, Scaling theory of fragmentation, \emph{Phys. Rev. Lett.}, \textbf{60}: 2450--2453, 1988.





\bibitem{Ernst:2007} M. H. Ernst and I. Pagonabarraga, The non-linear fragmentation equation, \emph{J. Phys. A. Math. Theor.}, \textbf{40}: F331--F337, 2007.


\bibitem{Escobedo:2006} M. Escobedo and S. Mischler, Dust and self-similarity for the Smoluchowski coagulation equation, \emph{Ann. I. H. Poincar\'{e}} AN \textbf{23}: 331--362, 2006.




\bibitem{Escobedo:2004} M. Escobedo, Ph. Lauren\c{c}ot and S. Mischler, On a kinetic equation for coalescing particles, \emph{Communications in Math. Phys.}, \textbf{246}: 237--267, 2004.


\bibitem{Escobedo:2003} M. Escobedo, Ph. Lauren\c{c}ot, S. Mischler and B. Perthame, Gelation and mass conservation in coagulation-fragmentation models,
\emph{J. Differential Equations}, \textbf{195}: 143--174, 2003.



\bibitem{Fournier:2005} N. Fournier and Ph. Lauren\c{c}ot,  Existence of self-similar solutions to Smoluchowski's coagulation equation, \emph{Commun. Math. Phys.}, \textbf{256}: 589--609, 2005.

\bibitem{Giri:2011} A. K. Giri  and G. Warnecke, Uniqueness for the coagulation-fragmentation equation with strong fragmentation,
 \emph{Z. Angew. Math. Phys.}, \textbf{62}: 1047--1063, 2011.








\bibitem{Kostoglou:2000} M. Kostoglou and A. J. Karabelas, A study of the nonlinear breakage equation: analytical and asymptotic solutions, \emph{J. Phys. A. Math. Gen.}, \textbf{33}: 1221--1232, 2000.



\bibitem{Laurencot:2001} Ph. Lauren\c{c}ot and D. Wrzosek, The discrete coagulation equations with collisional breakage, \emph{J. Stat. Phys.}, \textbf{104}: 193--220, 2001.


\bibitem{Norris:1999} J. R. Norris, Smoluchowski's coagulation equation: uniqueness, non-uniqueness and hydrodynamic limit for the stochastic coalescent,
\emph{Ann. Appl. Probab.}, \textbf{9}: 78--109, 1999.


\bibitem{Safronov:1972} V. S. Safronov, \emph{Evolution of the Protoplanetary Cloud and Formation of the Earth and the Planets} (Israel Program for Scientific Translations Ltd. Jerusalem), 1972.



\bibitem{Stewart:1989} I. W. Stewart, A global existence theorem for the general coagulation-fragmentation equation with unbounded kernels, \emph{Math. Methods
Appl. Sci.}, \textbf{11}: 627--648, 1989.



 \bibitem{Stewart:1990} I. W. Stewart, A uniqueness theorem for the coagulation-fragmentation equation, \emph{Math. Proc. Comb. Phil. Soc.}, \textbf{107}: 573--578, 1990.

 \bibitem{Vigil:2006} R. D. Vigil, I. Vermeersch and R. O. Fox, Destructive aggregation: Aggregation with collision-induced breakage, \emph{Colloid and Interface Science}, \textbf{302}: 149--158, 2006.


 \bibitem{Walker:2002} C. Walker, Coalescence and breakage processes, \emph{Math. Methods Appl. Sci.}, \textbf{25}: 729--748, 2002.


\bibitem{Wilkins:1982} D. Wilkins, A geometrical interpretation of the coagulation equation, \emph{J. Phys. A}, \textbf{15}: 1175--1178, 1982.


  }



\end{thebibliography}

\end{document}